\newtheorem{theorem}{Theorem}[section]
\newtheorem{claim}[theorem]{Claim}
\newtheorem{corollary}[theorem]{Corollary}
\newtheorem{lemma}[theorem]{Lemma}
\newtheorem{proposition}[theorem]{Proposition}
\newtheorem{question}[theorem]{Question}
\theoremstyle{definition}
\theoremstyle{theorem}
\numberwithin{equation}{section}
\newcommand{\<}{\langle}
\renewcommand{\>}{\rangle}
\newcommand{\dom}{\operatorname{dom}}
\newcommand{\ran}{\operatorname{ran}}
\newcommand{\Ult}{\operatorname{Ult}}
\newcommand{\RCA}{\mathsf{RCA}}
\newcommand{\RAN}{\mathsf{RAN}}
\newcommand{\PA}{\mathrm{PA}}
\newcommand{\CARD}{\mathrm{CARD}}
\newcommand{\WPHP}{\mathrm{WPHP}}
\newcommand{\GPHP}{\mathrm{GPHP}}
\newcommand{\PHP}{\mathrm{PHP}}
\newcommand{\FRT}{\mathrm{FRT}}
\newcommand{\fairFRT}{\mathrm{fairFRT}}
\begin{document}

\title{Finite Combinatorics and Fragments of Arithmetic}

\begin{abstract}
In fragments of first order arithmetic, definable maps on finite domains could behave very differently from finite maps.
Here combinatorial properties of $\Sigma_{n+1}$-definable maps on finite domains are compared in the absence of $B\Sigma_{n+1}$.
It is shown that $\GPHP(\Sigma_{n+1})$ (the $\Sigma_{n+1}$-instance of Kaye's General Pigeonhole Principle) lies strictly between $\CARD(\Sigma_{n+1})$ and $\WPHP(\Sigma_{n+1})$ (Weak Pigeonhole Principle for $\Sigma_{n+1}$-maps), and also that $\FRT(\Sigma_{n+1})$ (Finite Ramsey's Theorem for $\Sigma_{n+1}$-maps) does not imply $\WPHP(\Sigma_{n+1})$.
\end{abstract}

\subjclass[2010]{03F30, 03C20, 03H15}
\keywords{first order arithmetic, ultrapower, pigeonhole principle, Ramsey's theorem}

\author{Wang Wei}

\address{Institute of Logic and Cognition and Department of Philosophy\\
Sun Yat-Sen University\\
Guangzhou, China}
\email{wwang.cn@gmail.com}
\email{wangw68@mail.sysu.edu.cn}

\thanks{This project is partially supported by Grant 2023A1515010892 from Guangdong Basic and Applied Basic Research Foundation, 
and by Grant 22JJD110002 from the Ministry of Education, China.
The author thanks Chitat Chong and Yue Yang for their offerring the inclusion of some results from an unfinished joint project, and also Prof. Beklemishev for his many helpful comments.}

\maketitle

\section{Introduction}\label{s:Introduction}

There is an interesting relationship between finite combinatorics and fragments of first order arithmetic,
which is the topic of this article.
I shall briefly recall some history as motivation.

A usual statement of finite Pigeonhole Principle (PHP for short) is that there does not exist an injection from any
$z+1 = \{0,1,\ldots,z\}$ to $z = \{0,1,\ldots,z-1\}$.
Dimitracopoulos and Paris \cite{Dimitracopoulos.Paris:1986} introduced a variant of PHP (so-called $\PHP_1(\Sigma_n)$) over models of arithmetic, where the maps are just $\Sigma_n$-definable. So such maps may not exist in models which do not satisfy enough induction. They also introduced several other variants of PHP in similar vein and studied these variants in fragments of arithmetic. For example, they proved that $\PHP_1(\Sigma_{n+1})$ and $B\Sigma_{n+1}$ are equivalent over $\PA^- + I\Sigma_0$.

On the other hand, Kaye \cite{Kaye:1995.NDJFL} tried to use combinatorics to axiomatize $\kappa$-like models of arithmetic.
Recall that a model $M$ with a linear order $<$ is $\kappa$-like for a cardinal $\kappa$,
 if $|M| = \kappa$ and $|\{a \in M: M \models a < b\}| < \kappa$ for every $b \in M$.
Among the formalizations of combinatorial principles in \cite{Kaye:1995.NDJFL}, there are two of particular interest.
The first is $\CARD_2$ which is a second order statement of the non-existence of injections from $M$ to any $b$,
 and the second $\GPHP_2$ is also second order and states that for every $a$ there exists $b$
 s.t. there is no injection from $b$ to $a$.
Kaye proved that a model $M \models \PA$ is $\kappa$-like for some $\kappa$ iff $M$ satisfies $\CARD_2$,
 and that $M \models \PA$ is $\kappa$-like for a limit cardinal $\kappa$ iff $M$ satisfies $\GPHP_2$.
Kaye also studied $\kappa$-like models of fragments of $\PA$ in \cite{Kaye:1997.JLMC},
 where it is proved that every $M \models B\Sigma_n + \neg I\Sigma_n$ ($n > 1$) is elementarily equivalent to a $\kappa$-like model if $\kappa$ is singular.
As a consequence, $B\Sigma_n + \neg I\Sigma_n$ ($n > 1$) implies $\GPHP$,
 which is the fragment of $\GPHP_2$ mentioning only first order definable maps.

In recent years, fragments of $\CARD_2$ and related formalizations of combinatorial principles are found useful in reverse mathematics.
The fragment $\CARD(\Sigma_2)$ of $\CARD_2$, which is the restriction of $\CARD_2$ to $\Sigma_2$-definable maps, is used in several occasions (e.g., \cite{Seetapun.Slaman:1995} and \cite{Conidis.Slaman:2013.JSL}) to show that certain instances or variants of Ramsey's Theorem are not arithmetically conservative over $\RCA_0$.
B\'{e}langer et al. introduced $\WPHP(\Sigma_2)$, which lies strictly between $\CARD(\Sigma_2)$ and $\PHP_1(\Sigma_2)$, and used it to separate some interesting second order statements in reverse mathematics.

In this article, I shall study the relation between $\CARD(\Sigma_n)$, $\GPHP(\Sigma_n)$ and $\WPHP(\Sigma_n)$.
Besides, I shall also formalize finite Ramsey's theorem (FRT for short) in similar mode as Dimitracopoulos and Paris did to PHP and compare the resulted formalizations with $\GPHP(\Sigma_n)$ etc.
The organization of the remaining parts is as follows.
Section 2 recalls the detailed definitions of terms appeared above and some known relations between them, and also introduces some formalizations ($\FRT^e_k(\Sigma_n)$ and a weaker $\fairFRT^e_k(\Sigma_n)$) of FRT. 
Section 3 contains some basic results about the formalized combinatorial principles.
In section 4, $\CARD(\Sigma_{n+1})$ and $\GPHP(\Sigma_{n+1})$ are separated.
Section 5 presents the separation of $\FRT^e_k(\Sigma_n)$ and $\WPHP(\Sigma_n)$.
Section 6 discusses $\FRT^e_k(\Sigma_{n+1})$ and $\WPHP(\Sigma_{n+1})$ in the context of $P\Sigma_n$.
Finally, section 7 concludes the article with some open questions.

\section{Formalizations}\label{s:Formalizations}

The terminology here largely follows Kaye's monograph \cite{Kaye:MOPA}.
Some concepts from combinatorics are needed.
A \emph{coloring} is just a function, and a $k$-coloring is a function with range contained in $k = \{0,\ldots,k-1\}$.
For a given set $X$ and a positive integer $e$, $[X]^e$ is the collection of subsets of $X$ with exactly $e$ many elements.
Usually $[X]^1$ is identified with $X$, and $[X]^e$ is occasionally identified with $\{(x_0,\ldots,x_{e-1}): x_i \in X, x_0 < \ldots < x_{e-1}\}$.
Given a coloring $C$ defined on some $[X]^e$, 
 a set $H$ is \emph{homogeneous} or \emph{$C$-homogeneous} if $H \subseteq X$ and $C$ is constant on $[H]^e$.

Recall that $\CARD_2$ is defined by Kaye \cite{Kaye:1995.NDJFL} as
\begin{multline*}
  \forall F \forall a \big(\forall x \exists y < a (\<x,y\> \in F) \to \\
  \exists y < a \exists x_0, x_1 (x_0 \neq x_1 \wedge \<x_0,y\> \in F \wedge \<x_1,y\> \in F)\big).  
\end{multline*}
Kaye also defined $\CARD$ to be $\CARD_2$ restricted to first order definable $F$'s, and $\CARD(\Sigma_n)$ to be $\CARD_2$ restricted to $\Sigma_n$ definable $F$'s.

In \cite{Kaye:1995.NDJFL}, Kaye also introduced what he called \emph{a generalized pigeonhole principle} or $\GPHP_2$ as follows,
\[
  \forall x \exists y \forall F (F \text{ is not an injection from } y \text{ to } X).
\]
Similarly the restriction of $\GPHP_2$ to first order (or just $\Sigma_n$) definable functions is denoted by $\GPHP$ (or $\GPHP(\Sigma_n)$).
Belanger et al. \cite{BCWWY:2021} used a modified Erd\"{o}s' notation
\[
  \Sigma_n: y \to (z)^e_x
\] 
as an abbreviation of the first-order sentence that every $\Sigma_n$-definable map from $[y]^e$ to $x$ has a homogeneous set of size $z$.
So, $\GPHP(\Sigma_n)$ can be written as
\[
  \forall x \exists y (\Sigma_n: y \to (2)^1_x).
\]

Belanger et al. \cite{BCWWY:2021} defined \emph{Weak Pigeonhole Principle for $\Sigma_n$-definable maps} or $\WPHP(\Sigma_n)$ to be
\[
  \forall x > 0 (\Sigma_n: 2x \to (2)^1_x),
\]
i.e., there is no $\Sigma_n$-injection from any $2x > 0$ to $x$.

Some obvious relations and known results about the above formalized combinatorial principles are summarized below.

\begin{theorem}\label{thm:knowns}
Let $n$ be a positive integer and $\kappa$ be a singular cardinal.
\begin{enumerate}
  \item $B\Sigma_{n+1} \to \WPHP(\Sigma_{n+1}) \to \GPHP(\Sigma_{n+1}) \to \CARD(\Sigma_{n+1})$.
  \item $I\Sigma_n \not\vdash \CARD(\Sigma_{n+1})$.
  \item (Kaye \cite{Kaye:1997.JLMC}) Every $M \models B\Sigma_{n+1} + \neg I\Sigma_{n+1}$ is elementarily equivalent to a $\kappa$-like $N$.
  Thus $B\Sigma_{n+1} + \neg I\Sigma_{n+1} \vdash \GPHP$.
  \item (Belanger et al. \cite{BCWWY:2021}) $\WPHP(\Sigma_{n+1})$ lies strictly between $B\Sigma_{n+1}$ and $\CARD(\Sigma_{n+1})$.
\end{enumerate}
\end{theorem}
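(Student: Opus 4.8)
The theorem packages some immediate implications, item (1), with known facts, items (2)--(4); the plan is to reduce (1) to short restriction arguments over $\PA^-+I\Sigma_0$ resting on Dimitracopoulos--Paris, to settle (2) by producing a single model, and to quote Kaye and Belanger et al.\ for (3) and (4). For (1) I would descend the chain link by link. For $B\Sigma_{n+1}\to\WPHP(\Sigma_{n+1})$: by \cite{Dimitracopoulos.Paris:1986}, $B\Sigma_{n+1}$ proves $\PHP_1(\Sigma_{n+1})$ (there is no $\Sigma_{n+1}$-definable injection of $z+1$ into $z$), and a $\Sigma_{n+1}$-definable injection $F$ of $2x$ into $x$ with $x>0$ would restrict to $F\restriction(x+1)$ — still $\Sigma_{n+1}$-definable, since only a bounded condition is adjoined — an injection of $x+1\le 2x$ into $x$, which is impossible. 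For $\WPHP(\Sigma_{n+1})\to\GPHP(\Sigma_{n+1})$: given $x$, the witness is $y=2x+1$, for if $x=0$ there is no function at all from $\{0\}$ into $\emptyset$, and if $x>0$ then any $\Sigma_{n+1}$-definable injection of $y$ into $x$ restricts to one of $2x$ into $x$, contrary to $\WPHP(\Sigma_{n+1})$. For $\GPHP(\Sigma_{n+1})\to\CARD(\Sigma_{n+1})$: a $\Sigma_{n+1}$-definable injection of $M$ into some $a$ restricts, on the $y$ that $\GPHP(\Sigma_{n+1})$ furnishes for that $a$, to a $\Sigma_{n+1}$-definable injection of $y$ into $a$, which $\GPHP(\Sigma_{n+1})$ forbids.

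For (2) the plan is to exhibit one model $M\models I\Sigma_n+\neg\CARD(\Sigma_{n+1})$. By (1) one has $B\Sigma_{n+1}\vdash\CARD(\Sigma_{n+1})$, so the desired $M$ must in particular fail $B\Sigma_{n+1}$; concretely I would upgrade the classical Kirby--Paris model of $I\Sigma_n+\neg B\Sigma_{n+1}$ to one additionally carrying a $\Sigma_{n+1}$-definable partial surjection from a bounded interval onto $M$, which is precisely the failure of $\CARD(\Sigma_{n+1})$. For $n=1$ such models already occur in the reverse-mathematics literature cited in the introduction (the constructions behind, e.g., \cite{Seetapun.Slaman:1995} and \cite{Conidis.Slaman:2013.JSL}), and the general case follows by a standard relativization. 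I expect \emph{this} to be the main obstacle: $\neg B\Sigma_{n+1}$ hands us for free a cofinal $\Sigma_{n+1}$-definable map out of a bounded interval, but arranging that such a map can be taken onto \emph{all} of $M$ while $I\Sigma_n$ survives is the real content — indeed $\neg\CARD(\Sigma_{n+1})$ is strictly stronger than $\neg B\Sigma_{n+1}$, since $\CARD(\Sigma_{n+1})$ sits strictly below $B\Sigma_{n+1}$ by (4).

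For (3) I would quote Kaye: by \cite{Kaye:1997.JLMC}, for a singular $\kappa$ every $M\models B\Sigma_{n+1}+\neg I\Sigma_{n+1}$ is elementarily equivalent to a $\kappa$-like model $N$, and — $\kappa$ being a limit cardinal — such an $N$ satisfies the second-order $\GPHP_2$; this is the pertinent direction of the $\kappa$-likeness analysis going back to \cite{Kaye:1995.NDJFL}, whose kernel is that in a $\kappa$-like model for limit $\kappa$ the cardinalities of the initial segments $\{z<x\}$ have no largest element, so every $\{z<x\}$ is exceeded in size by some $\{z<y\}$ and hence admits no injection from it. Then $N\models\GPHP$ a fortiori; $\GPHP$ is an elementary schema and $M\equiv N$, so $M\models\GPHP$; and since $M$ was arbitrary, $B\Sigma_{n+1}+\neg I\Sigma_{n+1}\vdash\GPHP$. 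Finally, for (4) the implications $B\Sigma_{n+1}\to\WPHP(\Sigma_{n+1})\to\CARD(\Sigma_{n+1})$ are already covered by (1), and for the two non-reversibility assertions I would cite the models of $\WPHP(\Sigma_{n+1})+\neg B\Sigma_{n+1}$ and of $\CARD(\Sigma_{n+1})+\neg\WPHP(\Sigma_{n+1})$ constructed by Belanger et al.\ \cite{BCWWY:2021}.
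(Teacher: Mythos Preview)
Your proposal is correct and follows essentially the same approach as the paper, which treats this theorem as a summary of known facts: the paper's proof simply declares (1) ``obvious'' and for (2)--(4) points to the literature (\cite[Lemma 3.4]{Seetapun.Slaman:1995}, \cite[Proposition 3.2]{Kaye:1995.NDJFL}, and the attributed citations). Your elaborations for (1) and (3) are accurate but go well beyond what the paper supplies, and for (2) you are over-anticipating difficulty---the construction you flag as ``the main obstacle'' is exactly what the cited sources already provide, so a bare citation suffices.
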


\begin{proof}
(1) is obvious.
(2) can be found in several sources, e.g., \cite[Lemma 3.4]{Seetapun.Slaman:1995} and \cite[Proposition 3.2]{Kaye:1995.NDJFL}.
(3) and (4) are exactly from the corresponding citations.
\end{proof}

On the other hand, the following question posted by Kaye \cite{Kaye:1995.NDJFL} seems still open.

\begin{question}
Are $\CARD$ and $\GPHP$ equivalent (over $I\Sigma_1$) ?
\end{question}

Though this question is not answered here, section 3 contains a separation of $\CARD(\Sigma_{n+1})$ and $\GPHP(\Sigma_{n+1})$ over $I\Sigma_n$ ($n > 0$).
And the main theorem of section 5 implies the separation of $\WPHP(\Sigma_{n+1})$ and $\GPHP(\Sigma_{n+1})$.

Now, let us turn to formalizing Finite Ramsey Theorem.
The restriction of Finite Ramsey Theorem for $k$-colorings of $e$-tuples to functions definable via formulae in $\Gamma$ is denoted by $\FRT^e_k(\Gamma)$.
In particular, $\FRT^e_k(\Sigma_n)$ is the first-order sentence stating that
\[
  \forall x \exists y (\Sigma_n: y \to (x)^e_k).
\]
Let $\FRT^e(\Gamma)$ denote $\forall k (\FRT^e_k(\Gamma))$,
and let $\FRT(\Gamma)$ denote $\forall e \FRT^e(\Gamma)$.

One may associate a two-player game to a finite combinatorial principle like Finite Ramsey Theorem.
For example, in the game associated to Finite Ramsey Theorem,
the first player raises a challenge by providing a $\Sigma_n$-definable $k$-coloring of $[y]^e$,
and the second player responds with a finite set $H$ of size $x$.
If $[H]^e$ is homogeneous, the second player wins the game, otherwise the first player wins.
However, people may argue that such a game is unfair as the winning condition is more limited for the second player than for the first.
So, the following variants of Finite Ramsey Theorem may look fairer and are thus called $\fairFRT^e_k(\Gamma)$.
\begin{multline*}
  \forall x \exists y \forall F \in \Gamma (F \text{ is a } k \text{-coloring of } [y]^e \to \\
  \exists H \in \Gamma(H \text{ is an injection from } x \text{ to } y \wedge
   [\ran H]^e \text{ is homogeneous for } F)).  
\end{multline*}
Let $\fairFRT^e(\Gamma)$ denote $\forall k (\fairFRT^e_k(\Gamma))$.
%and let $\fairFRT^{<\omega}(\Gamma)$ denote the theory $\bigcup_{e \in \omega}\fairFRT^e(\Gamma)$.

\section{Basics}\label{s:Basics}

This section presents some basic results.
Firstly, it is easy to observe the followings.

\begin{proposition}
Over $\PA^-$,
\begin{enumerate}
  \item $I\Sigma_1 \vdash \FRT^{<\omega}(\Sigma_1)$.
  \item $I\Sigma_1$ proves the following implications
  \[
    \WPHP(\Gamma) \to \GPHP(\Gamma) \to \CARD(\Gamma).
  \]
  \item $I\Sigma_1$ proves that if $0 < e \leq e' \leq e'' \in \omega$ and $0 < k \leq k' \leq k'' \in \omega$ then 
  \[
    \FRT^{e''}_{k''}(\Gamma) \to \FRT^{e'}_{k'}(\Gamma) \to
    \fairFRT^{e'}_{k'}(\Gamma) \to \fairFRT^{e}_{k}(\Gamma).
  \]
  \item $I\Sigma_1 \vdash \fairFRT^{1}(\Gamma) \to \GPHP(\Gamma)$.
  \item $B\Sigma_{n+1} \vdash \FRT^{<\omega}(\Sigma_{n+1})$.
\end{enumerate}
\end{proposition}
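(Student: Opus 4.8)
The plan is to take the five items essentially independently, in roughly increasing order of difficulty: dispatch (2) and (4) by unwinding the definitions, handle (1) and (5) by a common ``code the colouring, then cite the honest finite Ramsey theorem'' argument, and finally prove the three implications packaged in (3). For (2), given $x$ one puts $y = 2x$ if $x > 0$ and $y = 1$ if $x = 0$, so that $\WPHP(\Gamma)$ (resp.\ triviality) forbids a $\Gamma$-injection $y \to x$; and if some $F \in \Gamma$ were a total injection $M \to a$, then $\GPHP(\Gamma)$ at $x = a$ gives $y$ with no $\Gamma$-injection $y \to a$, while $F$ restricted to $\{z : z < y\}$ --- still in $\Gamma$, as $\Gamma$ is closed under conjunction with bounded formulas --- is one. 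For (4), given $x > 0$, apply $\fairFRT^1_x(\Gamma)$ (the instance with $x$ colours) at input $2$ to get $y$ such that every $\Gamma$-definable $x$-colouring of $y = [y]^1$ is constant on $\{H(0), H(1)\}$ for some $\Gamma$-injection $H \colon 2 \to y$; a hypothetical $\Gamma$-injection $F \colon y \to x$ is such a colouring, and its constancy on $\{H(0), H(1)\}$ --- a genuine two-element set, since $H$ is injective --- contradicts injectivity of $F$. So $\Sigma_n \colon y \to (2)^1_x$, i.e.\ the $\GPHP(\Gamma)$-instance at $x$ holds; the case $x = 0$ is trivial.

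For (1) and (5), note that in $I\Sigma_1$ (resp.\ $B\Sigma_{n+1}$) a $\Sigma_1$- (resp.\ $\Sigma_{n+1}$-) definable $k$-colouring $F$ of $[y]^e$ is a total function on the bounded set $[y]^e$, so by $\Sigma_1$- (resp.\ $\Sigma_{n+1}$-) collection its graph is coded by a genuine finite function. The instance then reduces to the honest finite Ramsey theorem for $e$-tuples with $k$ colours, which I would establish in $I\Sigma_1$ for each standard $e$ separately: the Ramsey number $R_e(k, x)$ is given by a primitive recursive term, and the usual argument (fix a point, thin out by the honest pigeonhole principle, iterate) builds a homogeneous set of size $x$ by a $\Sigma_1$-recursion, all available in $I\Sigma_1$. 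Taking $y = R_e(k, x)$ then witnesses $\Sigma_1 \colon y \to (x)^e_k$ (resp.\ $\Sigma_{n+1} \colon y \to (x)^e_k$), and as $x, k$ were arbitrary we get $\FRT^e$ for each standard $e$, i.e.\ $\FRT^{<\omega}$. The scheme is over standard $e$ because the induction on $e$ cannot be internalised; and in (1) one needs $I\Sigma_1$ rather than $B\Sigma_1$ since the honest theorem already requires $\exp$.

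For (3) I would prove the three implications in turn. For $\FRT^{e''}_{k''}(\Gamma) \to \FRT^{e'}_{k'}(\Gamma)$: given a $\Gamma$-colouring $c$ of $[y]^{e'}$ with values in $k' \subseteq k''$, define the $\Gamma$-colouring $\hat c$ of $[y]^{e''}$ by $\hat c(\{y_0 < \dots < y_{e''-1}\}) = c(\{y_0 < \dots < y_{e'-1}\})$, invoke $\FRT^{e''}_{k''}$ at target size $x + (e'' - e')$, and delete the top $e'' - e'$ elements of the resulting homogeneous set --- this handles both monotonicities at once. For $\FRT^{e'}_{k'}(\Gamma) \to \fairFRT^{e'}_{k'}(\Gamma)$: the set delivered by $\FRT$ is a coded finite set, so its increasing enumeration is a coded (hence $\Delta_0$-definable with a parameter, a fortiori $\Gamma$) injection from $x$ onto it, which is the witness $\fairFRT$ asks for. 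For $\fairFRT^{e'}_{k'}(\Gamma) \to \fairFRT^e_k(\Gamma)$: lift a $\Gamma$-colouring $F$ of $[y]^e$ to the $\Gamma$-colouring $\hat F$ of $[y]^{e'}$ reading off the first $e$ coordinates, apply $\fairFRT^{e'}_{k'}$ at input $x + (e' - e)$ to obtain a $\Gamma$-injection $G$, and note that $F$ is constant on those $e$-subsets of $\ran G$ whose maximum is not among the top $e' - e$ elements of $\ran G$; a $\Gamma$-injection onto $\ran G$ minus those top elements then witnesses $\fairFRT^e_k$. I expect this last point to be the main obstacle. The trimmed set is easily $\Gamma$-definable, but producing a $\Gamma$-\emph{function} that enumerates it is delicate: the obvious increasing enumeration carries a $\Pi$-type clause (``no smaller element of the trimmed set is skipped''), and one must argue this stays inside $\Gamma$ --- exploiting, for $\Gamma = \Sigma_n$, that $e' - e$ is a standard number and the counting involved is over the range of a single $\Sigma_n$-definable function, so that the bookkeeping available from $I\Sigma_1$ suffices.
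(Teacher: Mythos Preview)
Your argument is correct and is exactly the kind of unpacking the author has in mind: the paper's entire proof of this proposition is the single word ``Obvious.''

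Regarding the obstacle you flag in the last implication of (3): since $e'-e$ is a fixed \emph{standard} integer, the $G$-preimages $p_0,\ldots,p_{e'-e-1}$ of the top $e'-e$ elements of $\ran G$, once they exist, may simply be absorbed as parameters, and then $H=G\circ\sigma$ with $\sigma\colon x\to(x+e'-e)\setminus\{p_0,\ldots,p_{e'-e-1}\}$ the $\Delta_0$ increasing bijection is the desired $\Gamma$-injection --- so the ``$\Pi$-type clause'' you worry about never enters. Whether those top elements are guaranteed to exist in a bare $I\Sigma_1$ model when $\Gamma$ has high complexity is indeed a point the paper glosses over; but the $e$-monotonicity of $\fairFRT$ is never invoked in any later result, so nothing downstream is at stake.
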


\begin{proof}
Obvious.
\end{proof}

To prove the next two theorems, the lemma below is needed.
Intuitively, it turns a $\Sigma_{n+1}$-injection into a $\Sigma_{n+1}$-enumeration of its domain which behaves nicely.

\begin{lemma}\label{lem:ISigma-tame-inj}
Let $M \models I\Sigma_n$ with $n > 0$.
Suppose that $F$ is a $\Sigma_{n+1}$-injection with range bounded by $b$.
Then in $M$, there exists a $\Sigma_{n}$-function $G: b \times M \to M$ s.t.
\begin{enumerate}
  \item $G(\cdot,s)$ is injective on $b$ for all $s$;
  \item $\dom F = \{x: \exists i < b (x = \lim_s G(i,s))\}$;
  \item If $i < b$ and $\lim_s G(i,s)$ exists then there exists $s$ with $G(j,s) = G(j,t)$ for all $j \leq i$ and $t > s$.
\end{enumerate}
Note that $\{i < b: \lim_s G(i,s) \text{ exists}\}$ is downward closed by (3).
\end{lemma}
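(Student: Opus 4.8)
The plan is to start from a $\Sigma_{n+1}$-definition of $F$ and peel off the outermost existential quantifier to obtain an approximation process. Write $x \in \dom F$ with value $F(x) = y$ iff $\exists w\, \theta(x,y,w)$ where $\theta \in \Pi_n$. First I would define, for each $x < b$ and each stage $s$, a $\Sigma_n$-predicate expressing ``by stage $s$ the pair $\langle x, y\rangle$ has been verified with witness $w$'', i.e.\ $\exists y,w < s\, \theta(x,y,w)$; since $\theta$ is $\Pi_n$ this whole statement is $\Sigma_{n+1}$ in general, so the first technical point is that over $I\Sigma_n$ a $\Pi_n$-formula with a bounded existential quantifier in front is still, after contraction, effectively $\Sigma_n$ relative to the right oracle — more precisely I would appeal to the standard fact (provable in $I\Sigma_n$, $n>0$) that $\Sigma_n$- and $\Pi_n$-formulas are closed under bounded quantification up to equivalence, so ``$\exists y, w < s\,\theta(x,y,w)$'' together with its negation is $\Delta_{n+1}$ and, crucially, the \emph{least} such witness $\langle y,w\rangle < s$ (if any) is a $\Sigma_n$-definable partial function of $(x,s)$ because $\Pi_{n}$-formulas admit least-number search below a bound over $I\Sigma_n$. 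I would therefore define a $\Sigma_n$-function $f(x,s)$ = ``the $y$-component of the least witnessed pair $\langle y,w\rangle < s$ for $x$, if one exists, and $x$ otherwise (a dummy value)''.

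Next I would convert the raw approximation $f$ into the genuinely injective $G$ demanded by (1). The idea is a finite-injury/priority bookkeeping internal to $M$: at stage $s$, go through $i = 0, 1, \dots, b-1$ in order and let $G(i,s)$ be $f(i,s)$ unless that value has already been claimed by some $j < i$ at this stage, in which case reassign $G(i,s)$ to some canonical unused element of $b$ (e.g.\ the least element of $b$ not in $\{G(j,s) : j<i\}\cup\{f(i,s)\}$, which exists and is found by bounded search, hence $\Sigma_n$). Because all of this is a bounded ($< b$) amount of work computed from the $\Sigma_n$-data $f(0,s),\dots,f(b-1,s)$, the resulting $G: b\times M \to M$ is $\Sigma_n$ and (1) holds by construction. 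For (2) I would argue: if $x \in \dom F$ with $F(x) = y$, let $w$ be a witness for $\theta(x,y,w)$; from some stage $s_0$ on the pair $\langle y,w\rangle$ is below $s$, and by injectivity of $F$ no $\langle y',w'\rangle$ with $y'\neq y$ can ever be witnessed, so $f(x,s)$ stabilizes to $y$; one then checks that the priority reassignment above $x$ cannot permanently disturb the slot for $x$ once all of $f(0,s),\dots,f(x,s)$ have settled (this uses that only finitely many $i\le x$ are involved, so a $\Sigma_n$-least-stage argument / $I\Sigma_n$ applied to the predicate ``$f(j,\cdot)$ has settled for all $j\le x$'' works). Conversely if $\lim_s G(i,s)$ exists and equals $z$, then from the stabilization of the top $i+1$ rows one extracts a genuine witness, so $z \in \ran(x\mapsto \text{row limit})$ is exactly $\dom F$ after identifying the row index with its limit.

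Claim (3) is then essentially a restatement of the stabilization fact used for (2): if $\lim_s G(i,s)$ exists, then each of $G(0,\cdot),\dots,G(i,\cdot)$ stabilizes, say $G(j,\cdot)$ is constant after stage $s_j$; take $s = \max_{j\le i} s_j$, which exists since we are maximizing a bounded set, and then $G(j,s)=G(j,t)$ for all $j\le i$, $t>s$. The remark about downward closure is immediate: if $G(i,\cdot)$ converges then so does each higher-priority row $G(j,\cdot)$ with $j\le i$, simply because the construction of row $j$ only ever looks at rows $<j$, so its convergence is implied by (indeed precedes) that of row $i$. The main obstacle I anticipate is the first one: being careful that the ``least witness below $s$'' operation genuinely stays $\Sigma_n$ rather than sliding up to $\Sigma_{n+1}$, which is where $I\Sigma_n$ (not merely $I\Sigma_0 + \exp$) is actually used — one needs the $\Pi_n$ least-element principle / the normal-form manipulations for the $\Sigma_n/\Pi_n$ hierarchy that require $\Sigma_n$-induction. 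The priority reassignment, by contrast, is routine bounded combinatorics once the $\Sigma_n$ approximation is in hand.
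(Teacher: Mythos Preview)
Your proposal has a genuine gap: you have confused the domain and the range of $F$. The hypothesis is that $\ran F \subseteq b$, while $\dom F$ may be an arbitrary (possibly cofinal) subset of $M$. Condition (2) requires the limits $\lim_s G(i,s)$ to enumerate the \emph{domain} of $F$; hence $G(i,s)$ must output candidate elements $x$ of $\dom F$, not values $F(x)$. Your construction sets $G(i,s) = f(i,s)$ where $f(x,s)$ approximates $F(x)$, so $\lim_s G(i,s)$ is (at best) $F(i) < b$, and you recover something like $\{F(i): i<b,\ i\in\dom F\}$ rather than $\dom F$. The argument you give for (2) then speaks of waiting for ``$f(0,s),\dots,f(x,s)$'' to settle for an arbitrary $x\in\dom F$, treating $x$ as a row index; but $x$ need not be below $b$, so this makes no sense.

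What is missing is the inversion that the paper performs. Starting from a $\Sigma_n$ Limit-Lemma approximation $F_0(x,s)$ to $F$, the paper assigns each $x<s$ a certificate $m(x,s)=\langle y,r\rangle$ recording the current apparent value $y$ together with the earliest stage $r$ after which $F_0(x,\cdot)$ has stayed at $y$ (with a large default otherwise). One then lets $G(i,s)$ be the element $x$ of rank $i$ in the ordering of $[0,s)$ by $m(\cdot,s)$. This automatically makes $G(\cdot,s)$ injective, and since a certificate, once permanently correct, is never beaten later by a smaller one, the top $i{+}1$ rows freeze together, yielding (2) and (3). Your priority-repair idea is in the right spirit, but it must be applied to an enumeration of \emph{inputs} ranked by stabilisation time, not to an enumeration of outputs.
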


\begin{proof}
We work in $M$.
By Limit Lemma, there exists a $\Sigma_n$-function $F_0$ s.t. $x \in \dom F$ iff $\lim_s F_0(x,s)$ exists and equals $F(x)$.
We may assume that $F_0$ is injective.
Define
\[
  m(x,s) = 
  \begin{cases}
    \min\{\<y,r\> \in b \times s: \forall t < s(r \leq t \to F(x,t) = y)\} & x < s \\
    \<b+x,s\> & x \geq s \vee F(x,s) \geq b.
  \end{cases}
\]
By $I\Sigma_n$, $m$ is a well-defined $\Sigma_n$-injection.
Moreover, it is uniformly $\Sigma_n$ in $s$ to obtain the $M$-finite set
\[
  M(s) = \{(x,y,r) \in s \times b \times s: m(x,s) = (y,r)\}.
\]
For $i < b$, if $i \geq |M(s)|$ then define $G(i,s) = s + i$,
otherwise define $G(i,s) = x$ iff
\[
  i = |\{x' < s: m(x',s) < m(x,s)\}|.
\]
Then $G$ is as desired.
\end{proof}

The first theorem here shows that $\GPHP$, $\FRT^1$ and $\fairFRT^1$ are locally equivalent.

\begin{theorem}\label{thm:GPHP-FRT}
Over $I\Sigma_n$ where $n > 0$,
\[
  \GPHP(\Sigma_{n+1}) \ \leftrightarrow\ \FRT^1(\Sigma_{n+1})
  \ \leftrightarrow\ \fairFRT^1(\Sigma_{n+1}) \ \leftrightarrow\ \FRT^1_2(\Sigma_{n+1}). 
\]
\end{theorem}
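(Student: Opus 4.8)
The plan is to establish the four equivalences by a cycle of implications, using Lemma~\ref{lem:ISigma-tame-inj} as the main tool to handle the directions that require constructing homogeneous sets or injections from raw $\Sigma_{n+1}$-colorings. The easy arrows are already available: $\FRT^1(\Sigma_{n+1}) \to \FRT^1_2(\Sigma_{n+1})$ is an instance of monotonicity from Proposition~3.1(3), and $\fairFRT^1(\Sigma_{n+1}) \to \GPHP(\Sigma_{n+1})$ is Proposition~3.1(4). Also $\FRT^1_k \to \fairFRT^1_k$ is from Proposition~3.1(3). So it suffices to close the loop by proving $\GPHP(\Sigma_{n+1}) \to \FRT^1(\Sigma_{n+1})$, i.e. that from a bound witnessing non-injectivity for $\Sigma_{n+1}$-maps into $k$ one can produce (uniformly) a bound witnessing the existence of a homogeneous set of arbitrary prescribed size $x$.

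First I would reduce $\FRT^1_k$ to the case $k=2$: given a $\Sigma_{n+1}$-definable $k$-coloring $C$ of $[y]^1 = y$, iterate $\GPHP$-type thinning. More directly, I would argue $\GPHP(\Sigma_{n+1}) \to \FRT^1_2(\Sigma_{n+1}) \to \FRT^1(\Sigma_{n+1})$, where the second step uses a standard product/iteration argument inside $I\Sigma_n$ to combine finitely many $2$-colorings into a $k$-coloring and recover a large monochromatic set; since all the relevant maps stay $\Sigma_{n+1}$ and the iteration has standard length $k \in \omega$, no extra induction beyond $I\Sigma_n$ is needed. The crux is therefore $\GPHP(\Sigma_{n+1}) \to \FRT^1_2(\Sigma_{n+1})$. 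Given $x$, apply $\GPHP(\Sigma_{n+1})$ repeatedly: let $a_0 = x$ and choose $a_{j+1}$ so that $\Sigma_{n+1}: a_{j+1} \to (2)^1_{a_j}$, for $j < x$; set $y = a_x$ (again a standard-length iteration). Now given a $\Sigma_{n+1}$-definable $2$-coloring $C : y \to 2$, I want a homogeneous set of size $x$.

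The main obstacle is that a $\Sigma_{n+1}$-coloring need not behave like an actual finite function in $M$ — its color classes may not be coded sets, and naive pigeonhole counting fails without $B\Sigma_{n+1}$. This is exactly where Lemma~\ref{lem:ISigma-tame-inj} enters. One color class, say $C^{-1}(0)$, is itself the domain of a $\Sigma_{n+1}$-partial injection (the inclusion map restricted to that class, composed with an enumeration), so the lemma supplies a $\Sigma_n$-function $G(i,s)$ on $b \times M$ (with $b$ a bound) approximating an injective enumeration of $C^{-1}(0)$, with the limits forming a downward-closed set of indices. If the set of indices where $\lim_s G(i,s)$ exists has size $\geq x$, those limits give a homogeneous set of color $0$ of size $x$ and we are done. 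Otherwise $C^{-1}(0)$ has fewer than $x$ elements in a concrete coded sense, so $C^{-1}(1)$ is ``cofinite in $y$'' up to $<x$ points; but then restricting $C$ to the portion of $y$ of the form $a_{x-1}$ (after removing the at most $x-1$ exceptional points, which is harmless because we padded with the iterated $\GPHP$ bounds) and recursing on the smaller target gives, by the $j$-step tower, a color-$1$ homogeneous set of size $x$. Making this recursion precise — propagating the ``at most finitely many exceptions'' bookkeeping through the tower of $\GPHP$ instances while keeping everything $\Sigma_{n+1}$ and within $I\Sigma_n$ — is the delicate part, and Lemma~\ref{lem:ISigma-tame-inj}(3) (downward closure of the ``limit exists'' set) is what makes the counting legitimate. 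I expect the write-up to spend most of its length on this $\GPHP(\Sigma_{n+1}) \to \FRT^1_2(\Sigma_{n+1})$ implication, with the remaining arrows dispatched by citing Proposition~3.1.
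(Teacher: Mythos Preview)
Your plan has two genuine gaps, both stemming from the same misconception: in $\FRT^1(\Sigma_{n+1}) = \forall k\, \FRT^1_k(\Sigma_{n+1})$ the quantifier $\forall k$ is an \emph{object-level} quantifier, so $k$ ranges over the model $M$, not just over $\omega$; likewise the $x$ in $\FRT^1_2$ is an arbitrary element of $M$. First, your reduction $\FRT^1_2 \to \FRT^1$ ``of standard length $k \in \omega$'' does not apply when $k$ is nonstandard: the usual bit-by-bit reduction of a $k$-colouring to $2$-colourings needs $\lceil \log k \rceil$ rounds, and each round is a meta-level invocation of an instance of the scheme $\FRT^1_2(\Sigma_{n+1})$, not a value of a function you can iterate by $\Sigma_n$-induction. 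Second, your tower $a_0 = x$, $a_{j+1}$ chosen so that $\Sigma_{n+1}: a_{j+1} \to (2)^1_{a_j}$ for $j < x$, has length $x$, which is again a model element; and the map sending $a$ to the least $b$ witnessing $\GPHP$ at $a$ is at best $\Sigma_{n+2}$, so $I\Sigma_n$ does not let you build this tower even for standard $x$ without further work.

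The paper avoids both problems by routing the cycle differently. For $\GPHP(\Sigma_{n+1}) \to \FRT^1(\Sigma_{n+1})$ it uses a \emph{single} application of $\GPHP$ with target $kx$: pick $y$ with $\Sigma_{n+1}: y \to (2)^1_{kx}$, then invoke \cite[Lemma~2.4]{BCWWY:2021} to extract an $M$-finite $A \subseteq y$ of size $\geq kx$ on which the $\Sigma_n$-approximation to the given colouring has already stabilised; ordinary finite pigeonhole on the coded set $A$ then yields a coded homogeneous set of size $\geq x$. The implication that actually carries the weight is $\FRT^1_2(\Sigma_{n+1}) \to \GPHP(\Sigma_{n+1})$, proved by contrapositive: from $\neg\GPHP$ one fixes $x$ and, for each $y$, a $\Sigma_{n+1}$-injection $F: [y]^{2x} \cup y \to x$; Lemma~\ref{lem:ISigma-tame-inj} is applied to \emph{this} $F$ (not to a colour class as you propose) to obtain a $\Sigma_n$-enumeration $G$ of the $2x$-subsets and points of $y$, and from $G$ a $\Sigma_{n+1}$-definable $2$-colouring of $y$ is built in which every $2x$-element subset is guaranteed to contain points of both colours. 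So Lemma~\ref{lem:ISigma-tame-inj} is indeed the engine, but it drives the opposite implication from the one you singled out as ``the crux''.
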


\begin{proof}
We work in a fixed $M \models I\Sigma_n$.

(1)
Suppose that $M \models \GPHP(\Sigma_{n+1})$.
Fix $x$ and $k$. 
By $\GPHP(\Sigma_{n+1})$, let $y$ be s.t.
\[
  \Sigma_{n+1}: y \to (2)^1_{kx}.
\]
Let $f: y \to 2$ be $\Sigma_{n+1}$ definable over $M$.
By Shoenfield's Limit Lemma, there exists a $\Sigma_n$-definable $g: y \times M \to 2$ s.t.
\[
  \forall z < y (f(z) = \lim_s g(z,s)).
\]
By \cite[Lemma 2.4]{BCWWY:2021}, there exist an $M$-finite $A \subseteq y$ and $s$ s.t.
$|A| \geq kx$ and 
\[
  \forall t > s, z \in A(g(z,s) = g(z,t) = f(z)).
\]
Let $i < k$ be s.t.
\[
  |\{z \in A: g(z,s) = i\}| \geq x.
\]
Then $H = \{z \in A: g(z,s) = i\}$ is $f$-homogeneous and $M$-finite of size $\geq x$. This shows that $M \models \FRT^1(\Sigma_{n+1})$.

(2)
Clearly, $\FRT^1(\Sigma_{n+1})$ implies both $\fairFRT^1(\Sigma_{n+1})$ and $\FRT^1_2(\Sigma_{n+1})$.

(3)
To see that $\fairFRT^1(\Sigma_{n+1})$ implies $\GPHP(\Sigma_{n+1})$,
assume $M \models \neg \GPHP(\Sigma_{n+1})$.
Then there are $\Sigma_{n+1}$-injections $F_y: y \to x$ for each $y \in M$.
These $F_y$'s witness $M \models \neg \fairFRT^1(\Sigma_{n+1})$.

(4)
It remains to prove that $\FRT^1_2(\Sigma_{n+1})$ implies $\GPHP(\Sigma_{n+1})$.
Assume that $M \models \neg \GPHP(\Sigma_{n+1})$.
So there exists $x$ 
 s.t. for every $y$ there exists a $\Sigma_{n+1}$-injection from $y$ into $x$.
Fix $y$.
Let $F: [y]^{2x} \cup y \to x$ be a $\Sigma_{n+1}$-injection.
Apply Lemma \ref{lem:ISigma-tame-inj} to $F$, we get a $\Sigma_n$-function $G$.

Fix $s$.
Let $A_{s,0} = B_{s,0} = \emptyset$.
For each $i < x$, if $G(i,s) \in [y]^{2x}$, then let $(a_{i,s},b_{i,s})$ be the least pair in 
\[
  G(i,s) - A_{s,i} \cup B_{s,i} \cup \{z < y: \exists j < i (z = G(j,s))\},
\]
and let $A_{s,i+1} = A_{s,i} \cup \{a_{i,s}\}$ and $B_{s,i+1} = B_{s,i} \cup \{b_{i,s}\}$.
The pair exists, because $|G(i,s)| = 2x$ and
\[
  |A_{s,i} \cup B_{s,i} \cup \{z < y: \exists j < i (z = G(j,s))\}| \leq 2i \leq 2x-2.
\]
For $z < y$, define
\[
  C(z,s) = 
  \begin{cases}
    0 & z \in A_{s,x} \\
    1 & \text{otherwise.}
  \end{cases}
\]
Clearly $C: y \times M \to 2$ is $\Sigma_n$.

For each $z \in y$, there exist $s$ and $i < x$ s.t. $G(i,t) = z$ for all $t \geq s$.
By the construction of $C$, $C(z,t) = C(z,s)$ for all $t > s$.
Thus $\bar{C}(z) = \lim_s C(z,s)$ defines a $\Sigma_{n+1}$-coloring of $y$. 

For each $z \in [y]^{2x}$, there also exist $s$ and $i < x$ s.t. $G(i,t) = z$ for all $t \geq s$.
By the construction of $(a_{i,s},b_{i,s})$, $a_i = \lim_t a_{i,t} = a_{i,s}$ and $b_i = \lim_s b_{i,s}$ exist and form a pair in $z$.
Thus $\bar{C}(a_i) = 0 \neq 1 = \bar{C}(b_i)$ and $z$ is not $\bar{C}$-homogeneous.
Hence, $\bar{C}$ has no homogeneous $M$-finite set with cardinality $2x$.

This finishes showing that $M \not\models \FRT^1_2(\Sigma_{n+1})$.
\end{proof}

It turns out that $\fairFRT^1_2(\Sigma_{n+1})$ is strictly weaker than the combinatorial principles considered in the above theorem.

\begin{theorem}\label{thm:ISigma-fairFRT}
For $n > 0$, $I\Sigma_n \vdash \fairFRT^1_2(\Sigma_{n+1})$.
\end{theorem}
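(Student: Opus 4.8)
The plan is to show that, working in a fixed $M \models I\Sigma_n$, every $\Sigma_{n+1}$-definable $2$-coloring $F$ of $[y]^1 = y$ admits (for suitably large $y$ relative to a given $x$) a $\Sigma_{n+1}$-definable injection from $x$ into $y$ whose range is $F$-homogeneous. The point is that $\fairFRT^1_2$ only requires the homogeneous set to be \emph{enumerated} by a $\Sigma_{n+1}$-function, not to exist outright as an $M$-finite set, so we do not need $\GPHP$ or $\Sigma_{n+1}$-collection; we just need enough room and the Limit Lemma. So given $x$, I would take $y$ large enough that the "majority color" among the first $\omega$-many (more precisely, enough) elements of $y$ is witnessed — concretely something like $y = 2^{2x}$ or just $y$ with $y \geq 2x$ suffices after the right bookkeeping, since one of the two color classes restricted to any $2x$-element initial segment has size $\geq x$.

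The key steps, in order: First, apply Shoenfield's Limit Lemma to get a $\Sigma_n$-definable $g : y \times M \to 2$ with $F(z) = \lim_s g(z,s)$ for all $z < y$. Second, apply the stabilization result \cite[Lemma 2.4]{BCWWY:2021} (as used in the proof of Theorem~\ref{thm:GPHP-FRT}) to obtain an $M$-finite $A \subseteq y$ with $|A| \geq 2x$ and a stage $s$ such that $g(z,t) = g(z,s) = F(z)$ for all $z \in A$ and $t > s$. Third, pick $i < 2$ with $|\{z \in A : g(z,s) = i\}| \geq x$; then $H_0 = \{z \in A : g(z,s) = i\}$ is an $M$-finite $F$-homogeneous set of size $\geq x$. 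Fourth — and this is the only subtlety — I must exhibit a \emph{$\Sigma_{n+1}$-definable injection} $h : x \to y$ with $\operatorname{ran} h \subseteq H_0$ (equivalently $\operatorname{ran} h$ an $F$-homogeneous set of size $x$). But since $H_0$ is genuinely $M$-finite and $|H_0| \geq x$, the map sending $j < x$ to the $j$-th element of $H_0$ is $\Delta_0$ in the canonical code of $H_0$, hence certainly $\Sigma_{n+1}$, and it is injective. That completes the verification of $\fairFRT^1_2(\Sigma_{n+1})$.

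I expect the main obstacle to be purely one of definability bookkeeping rather than of combinatorial content: one must be careful that the set $A$ (and hence $H_0$) produced by \cite[Lemma 2.4]{BCWWY:2021} is honestly $M$-finite and that the stage $s$ is available, so that the final enumeration function $h$ can be read off from an $M$-finite code and is therefore low in the hierarchy — this is exactly what makes $\fairFRT^1_2$ go through without collection. A secondary point to get right is the choice of $y$ as a function of $x$: we need the stabilizing set $A$ to have size at least $2x$, so $y$ should be taken large enough (in terms of $x$) that the quoted stabilization lemma delivers this, and then the pigeonhole split of $A$ into two color classes guarantees one of size $\geq x$. Once these two points are handled, the argument is a direct specialization of part (1) of the proof of Theorem~\ref{thm:GPHP-FRT}, with the observation that the "fair" version lets us stop as soon as we have the $M$-finite homogeneous set, never needing it to be reflected by a total $\Sigma_{n+1}$-coloring of all of $y$.
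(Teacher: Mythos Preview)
Your argument has a genuine gap, and in fact it proves too much: the homogeneous set $H_0$ you produce is $M$-finite, so your reasoning would establish $\FRT^1_2(\Sigma_{n+1})$ in $I\Sigma_n$, not merely $\fairFRT^1_2(\Sigma_{n+1})$. But by Theorem~\ref{thm:GPHP-FRT}, $\FRT^1_2(\Sigma_{n+1})$ is equivalent to $\GPHP(\Sigma_{n+1})$ over $I\Sigma_n$, and $I\Sigma_n \not\vdash \GPHP(\Sigma_{n+1})$ by Theorem~\ref{thm:knowns}. The failure is in your second step. In the proof of Theorem~\ref{thm:GPHP-FRT}(1), \cite[Lemma~2.4]{BCWWY:2021} is applied only \emph{after} $\GPHP(\Sigma_{n+1})$ has supplied a $y$ with $\Sigma_{n+1}: y \to (2)^1_{kx}$; the size bound on the stabilized set $A$ comes from that non-injection hypothesis, not from the arithmetic size of $y$. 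In a model of $I\Sigma_n + \neg\GPHP(\Sigma_{n+1})$ there is some $a$ such that \emph{every} $y$ admits a $\Sigma_{n+1}$-injection into $a$, and for $x > a$ no choice of $y$ will guarantee a stabilized $M$-finite set of size $2x$. Your ``secondary point to get right'' is therefore not bookkeeping but the heart of the matter, and it cannot be resolved in $I\Sigma_n$ alone.

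The paper's proof proceeds quite differently. It splits on whether $\GPHP(\Sigma_{n+1})$ holds. If it does, Theorem~\ref{thm:GPHP-FRT} already gives $\FRT^1_2(\Sigma_{n+1})$ and hence the fair version. If it fails, the paper shows that the cut $I = \{a: \exists b\,(\Sigma_{n+1}: b \to (2)^1_a)\}$ is proper and additive, takes any $b > I$ as the witness $y$, and for a $\Sigma_{n+1}$-coloring $F: b \to 2$ uses Lemma~\ref{lem:ISigma-tame-inj} to enumerate each color class $F^{-1}(i)$ by a $\Sigma_{n+1}$-injection $\bar{G}_i$ whose domain is a downward-closed subset of $b$. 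A three-case analysis on whether the domains $\dom \bar{G}_i$ lie in $I$, exceed $I$, or equal $I$ then exploits precisely the \emph{abundance} of $\Sigma_{n+1}$-injections supplied by $\neg\GPHP$ to compose a $\Sigma_{n+1}$-injection from $a$ into one of the color classes. The essential idea you are missing is that under $\neg\GPHP$ one does not look for an $M$-finite homogeneous set at all; instead one turns the pathological injections themselves into the required $\Sigma_{n+1}$-enumeration of a homogeneous set.
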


\begin{proof}
By Theorem \ref{thm:GPHP-FRT},
$
  I\Sigma_n + \GPHP(\Sigma_{n+1}) \vdash \fairFRT^1_2(\Sigma_{n+1}).
$

Below we work in a fix
$
  M \models I\Sigma_n + \neg \GPHP(\Sigma_{n+1}).
$
Let
\[
  I = \{a \in M: \text{ there exists } b \text{ s.t. } \Sigma_{n+1}: b \to (2)^1_a\}.
\]
By $\neg \GPHP(\Sigma_{n+1})$, $I$ is a proper cut.
If $a \in I < b$ then $\Sigma_{n+1}: b \to (2)^1_a$.

We claim that $I$ is an additive cut.
Let $a \in I$, pick $b > a$ s.t. $\Sigma_{n+1}: b \to (2)^1_a$.
Let $F: 2b \to 2a$ be a $\Sigma_{n+1}$-map.
By \cite[Lemma 2.4(5)]{BCWWY:2021}, there exist $M$-finite $A_0 \subseteq [0,b-1]$ and $A_1 \subseteq [b,2b-1]$ s.t.
$|A_i| \geq a$ and $\{(x,F(x)): x \in A_i\}$ is also $M$-finite for $i < 2$.
So $F$ cannot be injective.

Fix $a$.
Let $b > I$.
Fix a $\Sigma_{n+1}$-coloring $F: b \to 2$.
Apply Lemma \ref{lem:ISigma-tame-inj} to the inclusion maps $F^{-1}(i) \subseteq b$, we get a $\Sigma_{n+1}$-injection $\bar{G}_i$ from a downward closed subset of $b$ to $F^{-1}(i)$ for each $i < 2$.
There are three cases to consider.

(i)
If there exist $c_i \in I$ for $i < 2$ with $\dom \bar{G}_i < c_i$, then we can define a $\Sigma_{n+1}$-injection from $b$ to $c_0 + c_1$ as follows,
\[
  H(x) =
  \begin{cases}
    \bar{G}_0^{-1}(x) & F(x) = 0 \\
    c_0 + \bar{G}_1^{-1}(x) & F(x) = 1.
  \end{cases}
\]
However, this is impossible, since $b > I$ and $I$ is additive.

(ii)
There exist $i < 2$ and $c$ s.t. $I < c \in \dom \bar{G}_i$.
As $c > I$, there exists a $\Sigma_{n+1}$-injection $H: a \to c$.
Then $\bar{G}_i \circ H$ is a $\Sigma_{n+1}$-injection from $a$ to $F^{-1}(i)$.

(iii)
Both $\dom \bar{G}_i = I$ ($i < 2$).
Define $H: b \to M$ by
\[
  H(x) =
  \begin{cases}
    2 \bar{G}_0^{-1}(x) & F(x) = 0 \\
    2 \bar{G}_1^{-1}(x) + 1 & F(x) = 1.
  \end{cases}
\]
As $I$ is additive and $\bar{G}_i$'s are injective, $H$ is a $\Sigma_{n+1}$-bijection from $b$ onto $I$.
Then $\bar{G}_i \circ H: b \to F^{-1}(i)$ is a $\Sigma_{n+1}$-injection for both $i < 2$.
As $b > I$, there exists a $\Sigma_{n+1}$-injection from $a$ to $b$, and the composition of such an injection and $\bar{G}_i \circ H$ witnesses the existence of a $\Sigma_{n+1}$-injection from $a$ to $F^{-1}(i)$.

So we show that $M \models \fairFRT^1_2(\Sigma_{n+1})$.
\end{proof}

\begin{corollary}
Over $I\Sigma_n$ ($n > 0$), $\fairFRT^1_2(\Sigma_{n+1})$ is strictly weaker than either of $\fairFRT^1(\Sigma_{n+1})$, $\FRT^1_2(\Sigma_{n+1})$ and $\GPHP(\Sigma_{n+1})$.
\end{corollary}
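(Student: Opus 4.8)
The plan is to read the Corollary off the three results already in place: the equivalences of Theorem~\ref{thm:GPHP-FRT}, the unconditional provability of Theorem~\ref{thm:ISigma-fairFRT}, and the failure of $\CARD(\Sigma_{n+1})$ in $I\Sigma_n$ from Theorem~\ref{thm:knowns}(2). Concretely there are two things to verify: first, that each of $\fairFRT^1(\Sigma_{n+1})$, $\FRT^1_2(\Sigma_{n+1})$ and $\GPHP(\Sigma_{n+1})$ implies $\fairFRT^1_2(\Sigma_{n+1})$ over $I\Sigma_n$; and second, that $\fairFRT^1_2(\Sigma_{n+1})$ implies none of these three over $I\Sigma_n$.

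For the first point, recall from the opening Proposition of this section that $I\Sigma_1$, and hence $I\Sigma_n$, proves $\FRT^{e}_{k}(\Gamma)\to\fairFRT^{e}_{k}(\Gamma)$; in particular $\FRT^1_2(\Sigma_{n+1})\to\fairFRT^1_2(\Sigma_{n+1})$. Since Theorem~\ref{thm:GPHP-FRT} shows that $\GPHP(\Sigma_{n+1})$, $\fairFRT^1(\Sigma_{n+1})$ and $\FRT^1_2(\Sigma_{n+1})$ are provably equivalent over $I\Sigma_n$, each of the three listed principles implies $\FRT^1_2(\Sigma_{n+1})$ and therefore $\fairFRT^1_2(\Sigma_{n+1})$.

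For the second point I would exhibit a single model that separates $\fairFRT^1_2(\Sigma_{n+1})$ from all three simultaneously. By Theorem~\ref{thm:knowns}(2) there is $M\models I\Sigma_n+\neg\CARD(\Sigma_{n+1})$, and since $\GPHP(\Sigma_{n+1})\to\CARD(\Sigma_{n+1})$ (Theorem~\ref{thm:knowns}(1)), this $M$ satisfies $\neg\GPHP(\Sigma_{n+1})$, hence also $\neg\fairFRT^1(\Sigma_{n+1})$ and $\neg\FRT^1_2(\Sigma_{n+1})$ by the equivalences of Theorem~\ref{thm:GPHP-FRT}. On the other hand Theorem~\ref{thm:ISigma-fairFRT} gives $M\models\fairFRT^1_2(\Sigma_{n+1})$. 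Thus $M$ witnesses $I\Sigma_n\not\vdash\fairFRT^1_2(\Sigma_{n+1})\to X$ for every $X\in\{\fairFRT^1(\Sigma_{n+1}),\FRT^1_2(\Sigma_{n+1}),\GPHP(\Sigma_{n+1})\}$. Combining the two points yields the Corollary.

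The hard part has in fact already been done, in Theorems~\ref{thm:GPHP-FRT} and~\ref{thm:ISigma-fairFRT}; what remains for the Corollary is purely organisational — observing that one model of $I\Sigma_n+\neg\CARD(\Sigma_{n+1})$ separates $\fairFRT^1_2(\Sigma_{n+1})$ from the whole equivalence class sitting above it, and that the reverse implications come for free from monotonicity. A natural addendum worth recording is that the same model shows $\fairFRT^1_2(\Sigma_{n+1})$ does not even imply $\CARD(\Sigma_{n+1})$ over $I\Sigma_n$.
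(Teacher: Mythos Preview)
Your proposal is correct and follows essentially the same route as the paper: use Theorem~\ref{thm:GPHP-FRT} to collapse the three principles into one equivalence class, invoke Theorem~\ref{thm:knowns} to see that this class is unprovable in $I\Sigma_n$, and then apply Theorem~\ref{thm:ISigma-fairFRT} to conclude strictness. Your write-up is more explicit than the paper's (you spell out the model and the forward implications via the monotonicity Proposition), but the underlying argument is the same.
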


\begin{proof}
By Theorem \ref{thm:GPHP-FRT}, both $\fairFRT^1(\Sigma_{n+1})$ and $\FRT^1_2(\Sigma_{n+1})$ are equivalent to $\GPHP(\Sigma_{n+1})$ over $I\Sigma_n$.
By Theorem \ref{thm:knowns}, all of the aboves are unprovable in $I\Sigma_n$.
So the corollary follows from Theorem \ref{thm:ISigma-fairFRT}.
\end{proof}

\section{Generalized Pigeonhole Principle and Cardinality Scheme}

This section is devoted to separate $\CARD(\Sigma_{n+1})$ and $\GPHP(\Sigma_{n+1})$.

\begin{theorem}\label{thm:CSigma-GPHP}
$I\Sigma_n + \CARD(\Sigma_{n+1}) \not\vdash \GPHP(\Sigma_{n+1})$.
\end{theorem}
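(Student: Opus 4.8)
The plan is to construct a model $M \models I\Sigma_n + \CARD(\Sigma_{n+1}) + \neg\GPHP(\Sigma_{n+1})$. The natural strategy, following the ultrapower technique alluded to in the abstract and in the cited work of B\'{e}langer et al., is to start from a model $K \models \PA$ (or at least $K \models B\Sigma_{n+1}$) together with a suitable nonprincipal ultrafilter, and form a $\Sigma_{n+1}$-reduced-power-type construction so that the resulting $M$ has a controlled family of $\Sigma_{n+1}$-definable maps. Concretely, I would take $M$ to be an initial segment of, or a $\Sigma_{n+1}$-elementary substructure built over, an ultrapower $\Ult(K)/U$, chosen so that: (a) there is a $\Sigma_{n+1}$-definable injection of $M$ into some fixed $b \in M$ fails — giving $\CARD(\Sigma_{n+1})$ — yet (b) there is some fixed $a \in M$ such that every $b \in M$ admits a $\Sigma_{n+1}$-definable injection from $b$ into $a$, which is exactly $\neg\GPHP(\Sigma_{n+1})$. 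The asymmetry between (a) and (b) is the crux: $\CARD$ forbids collapsing the whole universe, but $\neg\GPHP$ demands collapsing arbitrarily large bounded pieces into one fixed bound $a$.

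The key steps, in order, would be: (1) Fix $K \models \PA$ and an element $a \in K$ that is "large" relative to a cut $I \subseteq K$ (for instance, $a$ coding a cofinal sequence of cardinalities below $I$), and identify $M$ with the structure whose elements are $\Sigma_{n+1}$-definable-over-$K$ objects below $I$, amalgamated appropriately. (2) Verify $M \models I\Sigma_n$ — this should follow from $K \models \PA$ together with the standard fact that $\Sigma_n$-induction transfers to such $\Sigma_{n+1}$-closures, using the Limit Lemma machinery (exactly the style of Lemma~\ref{lem:ISigma-tame-inj}) to reduce $\Sigma_{n+1}$-definitions over $M$ to $\Sigma_n$-limits. (3) Establish $\neg\GPHP(\Sigma_{n+1})$ in $M$: for the chosen $a$ and any $b \in M$, use a $\Sigma_{n+1}$-definable enumeration of the elements below $b$ in the style of Lemma~\ref{lem:ISigma-tame-inj} (turning an inclusion into a tame enumeration) to get an injection $b \hookrightarrow a$; the point is that $b$, being bounded below $I$, has "fewer than $a$ many" predecessors in the sense witnessed by the underlying $K$. (4) Establish $\CARD(\Sigma_{n+1})$ in $M$: given a putative $\Sigma_{n+1}$-definable $F : M \to b$ with domain all of $M$, pull it back to a $\Sigma_{n+1}$-object over $K$ via the Limit Lemma, and argue in $K$ (which has enough induction) that $F$ must collide — i.e. the preimage of some $y < b$ is unbounded in $M$, hence contains two elements. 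Here one leverages that $M$ is "$\kappa$-like from the outside" while no single $b \in M$ exhausts it.

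The main obstacle I expect is step (4) reconciled with step (3): one must engineer the construction so that arbitrarily large \emph{bounded} intervals $[0,b)$ of $M$ inject into the fixed $a$ (failure of $\GPHP$), while the \emph{whole} model $M$ still does not inject into any $b$ (holding of $\CARD$). This forces $M$ to have order type resembling a limit cardinal that is nonetheless "not closed under the relevant definable suprema" — the delicate point is that the $\Sigma_{n+1}$-definable injections witnessing $\neg\GPHP$ must \emph{not} be uniformly definable in $b$, for otherwise one could splice them into a single $\Sigma_{n+1}$-injection of a cofinal-in-$M$ set into $a$ and contradict $\CARD(\Sigma_{n+1})$. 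Getting the definability bookkeeping right — ensuring each bounded collapse lives at the right level but the diagonal collapse does not — is where the absence of $B\Sigma_{n+1}$ is essential and where the real work lies; I would handle it by building $M$ as a union of a chain of $\Sigma_n$-elementary pieces along a non-$\Sigma_{n+1}$-definable cofinal sequence, so that no $\Sigma_{n+1}$ formula can see the whole cofinal structure at once.
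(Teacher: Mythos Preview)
Your diagnosis of the tension is correct: one needs each bounded interval $b$ to inject into a fixed $a$ via some $\Sigma_{n+1}$-map, while no single $\Sigma_{n+1}$-formula injects the whole model into any bound, and you rightly flag that the injections $b \hookrightarrow a$ must not be uniform in $b$. But your concrete plan runs in the wrong direction and is missing the one lemma that does the real work. The paper does not start from $K \models \PA$ and try to break $\GPHP$ inside a substructure. It starts from a countable $M_0 \models I\Sigma_n + \neg\CARD(\Sigma_{n+1})$ --- so there is already a $\Sigma_{n+1}$-injection from all of $M_0$ into some fixed $a$ --- and then builds an $\omega$-chain of $\Sigma_{n+1}$-elementary \emph{cofinal} extensions $M_0 \preceq M_1 \preceq \cdots$, each step a Paris-style internal ultrapower $M_{k+1} = M_k[c_k]$, scheduled so as to kill, one by one, every $\Sigma_{n+1}$-formula that might define a total injection into $a$; along the way one fixes a cofinal sequence $(b_k)$ and arranges $[0,b_k]^{M_k} = [0,b_k]^{M_{k+1}}$. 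The union $N$ then satisfies $\CARD(\Sigma_{n+1})$ by the killing schedule, not by any pull-back-to-$K$ argument as in your step~(4).

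The step your outline does not supply --- and which is the heart of the proof --- is why $\neg\GPHP(\Sigma_{n+1})$ survives to $N$. For this the paper proves a dedicated lemma: if $M$ carries a $\Sigma_{n+1}$-injection $M \to a$ and $N = M[c]$ is a $\Sigma_{n+1}$-elementary cofinal ultrapower extension with $c < a$, then $N$ again carries a (new) $\Sigma_{n+1}$-injection $N \to a$. This is not automatic; one rewrites the old injection via its $\Sigma_n$ limit approximation and manufactures a fresh injection in $N$ using the generator $c$. Granting that lemma, each $M_k$ has a $\Sigma_{n+1}$-injection $M_k \to a$, hence $b_k \to a$; since the interval $[0,b_k]$ is frozen from stage $k$ on and the chain is $\Sigma_{n+1}$-elementary, that bounded injection persists into $N$. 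That is precisely the non-uniformity you anticipated --- the formula witnessing $b_k \hookrightarrow a$ depends on $k$ --- but making it actually come out requires the preservation lemma, which your sketch does not indicate. Finally, note the chain must be $\Sigma_{n+1}$-elementary, not $\Sigma_n$ as you wrote: you need $\Sigma_{n+1}$-injections to go up and the $\Pi_{n+1}$-fact ``$\varphi$ fails to define a total map'' to go up as well.
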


The proof of the above separation theorem uses an ultrapower construction invented by Paris \cite{Paris:1981}.

Let $M \models I\Sigma_1$ and $0 < a \in M$.
An \emph{$M$-ultrafilter on $a$} is an ultrafilter $U$ of the Boolean algebra $\mathcal{P}(a)^M$ consisting of $M$-finite subsets of the $M$-interval $[0,a-1]^M$.
If $f$ and $g$ are $M$-finite functions with domain $[0,a-1]^M$, then define
\[
  f \sim_U g \Leftrightarrow \{x \in M: M \models x < a \wedge f(x) = g(x)\} \in U,
\]
and
\[
  [f]_{U} = \{g \in M: f \sim_U g\}.
\]
Let $\Ult_U(M)$ be the natural structure defined on the set of all $[f]_U$'s,
and call it \emph{the ultrapower of $M$ by $U$}.

\begin{theorem}[Paris]\label{thm:Paris-Ult}
Let $U$ be an $M$-ultrafilter on a positive $a \in M$.
If $M \models I\Sigma_n$ 
then $\Ult_U(M) \models I\Sigma_n$ and $\Ult_U(M)$ is a $\Sigma_{n+1}$-elementary cofinal extension of $M$.
\end{theorem}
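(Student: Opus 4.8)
The plan is to verify the three assertions of Theorem~\ref{thm:Paris-Ult}---that $\Ult_U(M)$ models $I\Sigma_n$, that the embedding is cofinal, and that it is $\Sigma_{n+1}$-elementary---by analysing the basic properties of the ultrapower construction. First I would fix notation: let $j\colon M\to \Ult_U(M)$ send $c$ to $[\mathrm{const}_c]_U$, the class of the constant function on $[0,a-1]^M$ with value $c$, and check the usual \emph{Los-type} theorem in the form available here. Since $U$ is only an $M$-ultrafilter on a Boolean algebra of $M$-finite sets, the statements to which Los's theorem applies are exactly those that can be decided $M$-finitely; the key observation is that for a \emph{bounded} formula (or more generally a formula whose satisfaction set $\{x<a : M\models \varphi(f_0(x),\dots,f_{k-1}(x))\}$ is $M$-finite and definable), $\Ult_U(M)\models\varphi([f_0]_U,\dots)$ iff that satisfaction set lies in $U$. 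The cofinality of $j$ is then immediate: given any $[f]_U$ with $f\colon[0,a-1]^M\to M$ an $M$-finite function, the range of $f$ is $M$-finite, hence bounded in $M$ by some $c$, so $[f]_U < j(c)$.

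Next I would turn to the $\Sigma_{n+1}$-elementarity, which is the heart of the matter and the step I expect to be the main obstacle. The direction $M\models\varphi(\bar c)\Rightarrow\Ult_U(M)\models\varphi(j(\bar c))$ is the easier half and should follow from Los for bounded formulas together with an induction on the quantifier complexity up to $\Sigma_{n+1}$, using $I\Sigma_n$ in $M$ to bound the relevant witnesses by an $M$-element and thereby reduce existential quantifiers to $M$-finite searches. The converse, reflection of $\Sigma_{n+1}$ facts from $\Ult_U(M)$ down to $M$, is where care is needed: given $\Ult_U(M)\models\exists y\,\theta(y,j(\bar c))$ with $\theta\in\Pi_n$, one obtains a witness $[g]_U$; the problem is to locate a single $M$-element witnessing the $M$-statement $\exists y\,\theta(y,\bar c)$. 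Here one uses that $g$ is $M$-finite with bounded range, so the $M$-statement ``there is $y$ in $\ran g$ with $\theta(y,\bar c)$'' is a bounded search over an $M$-finite set; the subtlety is that $\theta$ is $\Pi_n$, not bounded, so one must first argue, via $I\Sigma_n$ and the already-established equivalence of $\Pi_n$-satisfaction between $M$ and $\Ult_U(M)$ on standard-length tuples, that $\Ult_U(M)\models\theta([g]_U,j(\bar c))$ transfers to $M\models\theta(g(x_0),\bar c)$ for a $U$-large (hence nonempty) set of $x_0$. Picking any such $x_0$ gives the required witness in $M$.

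Finally, for $\Ult_U(M)\models I\Sigma_n$: by Paris's argument one shows that $\Ult_U(M)$ satisfies $B\Sigma_n$ and $I\Sigma_n$ by pulling back instances along $j$. Concretely, given a $\Sigma_n$-formula $\psi$ with parameters $[\bar f]_U$ and an instance of induction that fails in $\Ult_U(M)$, one represents the putative least-counterexample data by $M$-finite functions and uses $I\Sigma_n$ in $M$ fibrewise---for each $x<a$ one has induction for $\psi(\cdot,\bar f(x))$ in $M$---then glues the fibrewise least counterexamples into an $M$-finite function whose class contradicts the assumed failure; the $M$-finiteness of this glued function is exactly what $I\Sigma_n$ (applied to the $\Sigma_n$-definable ``least counterexample'' operation) provides. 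The only delicate point is ensuring the fibrewise construction is itself $\Sigma_n$-definable in $M$ so that $I\Sigma_n$ applies, which it is because the least-counterexample function for a $\Sigma_n$ predicate is $\Sigma_n$ under $I\Sigma_n$. Assembling these pieces---Los for $M$-finite formulas, cofinality from bounded ranges, upward $\Sigma_{n+1}$-preservation by induction on complexity, downward reflection via $U$-largeness of witness fibres, and fibrewise $I\Sigma_n$---yields the theorem.
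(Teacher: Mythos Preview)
The paper does not supply its own proof of this theorem: it is stated with attribution to Paris \cite{Paris:1981} and invoked as a known result, so there is nothing in the paper to compare your argument against.

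That said, your outline is essentially the standard one and is sound. One organisational remark: the cleanest route is to establish \L o\'s's theorem simultaneously for all $\Sigma_k$ and $\Pi_k$ formulae with $k\le n$, the inductive step being that a bounded $\Sigma_k$-definable subset of $[0,a-1]^M$ is $M$-finite by $I\Sigma_k$ (hence lies in the Boolean algebra on which $U$ lives). Once \L o\'s holds at level $n$, both directions of $\Sigma_{n+1}$-elementarity fall out uniformly---the split into an ``easier'' upward half and a ``harder'' downward half that you describe is not really needed, since a $\Sigma_{n+1}$ sentence $\exists y\,\theta(y,\bar c)$ with $\theta\in\Pi_n$ holds in $\Ult_U(M)$ iff some $[g]_U$ satisfies $\theta$, iff $\{x<a:M\models\theta(g(x),\bar c)\}\in U$ by \L o\'s at level $n$, iff that set is nonempty, iff $M\models\exists y\,\theta(y,\bar c)$. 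Your fibrewise least-element argument for $I\Sigma_n$ in the ultrapower is correct; it is exactly the point where one needs that the $\Sigma_n$ least-number operator is itself $\Sigma_n$ under $I\Sigma_n$, so that the glued function is $M$-finite.
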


It is well-known that an ultrapower construction as above can also be viewed as a Skolem-hull construction inside a monster model.
This is stated and proved explicitly for the convenience of the reader.
The Skolem-hull viewpoint is useful in the proof of separation.

\begin{lemma}\label{lem:Ult-reprstn}
Let $M \models I\Sigma_1$ and $U$ be an $M$-ultrafilter on a positive $a \in M$.
There exist $N$ and $c \in N$, s.t., $M \subseteq_{cf} N$,
 \[
  N = \{f(c): f \in M \models f \text{ is a function with domain } a\}.
 \]
 and the mapping $[f]_U \mapsto f(c)$ is an isomorphism from $\Ult_U(M)$ onto $N$.
\end{lemma}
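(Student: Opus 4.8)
The plan is to exhibit $N$ as a substructure of a sufficiently saturated elementary extension of $M$, generated over $M$ by a single ``generic'' element of the interval $[0,a-1]$. First I would fix a $|M|^{+}$-saturated $M^{\ast} \succ M$ (a monster model in the relevant sense) and consider the $1$-type
\[
  p(v) \;=\; \{\, v < a \,\} \;\cup\; \{\, v \in X : X \in U \,\},
\]
where each $X \in U$ is named by its code in $M$ and ``$v \in X$'' abbreviates the associated $\Delta_0$-formula with that parameter. Since $U$ is a proper filter on $\mathcal{P}(a)^{M}$, any finite subfamily of $U$ has nonempty intersection already in $M$; hence $p$ is finitely satisfiable in $M$, so consistent, and as it uses at most $|M|$ parameters it is realized by some $c \in M^{\ast}$ with $c < a$.

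Next I would put
\[
  N \;=\; \{\, f(c) : f \in M,\ M \models f \text{ is a function with domain } a \,\},
\]
reading $f(c)$ inside $M^{\ast}$ (legitimate since $M \preceq M^{\ast}$ and $c < a$). Pointwise sums, products, and constant functions of $M$-finite functions with domain $a$ are again such functions with codes in $M$ — elementary coding available in $I\Sigma_1$ — so $N$ is closed under $+,\cdot,0,1$ and, being an induced substructure, under $<$ as well; thus $N$ is a substructure of $M^{\ast}$. Moreover each such $f$ has range bounded by some $b \in M$, so $f(c) < b$; together with the fact that each $m \in M$ is the value at $c$ of the constant function $m$, this yields $M \subseteq_{cf} N$.

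Finally I would define $\Phi\colon \Ult_U(M) \to N$ by $\Phi([f]_U) = f(c)$ and verify it is an isomorphism onto $N$. Well-definedness and injectivity come from the choice of $c$: the $M$-finite set $\{\, i < a : f(i) = g(i) \,\}$ lies in $U$ iff $c$ belongs to it — the nontrivial direction uses that $U$ is an \emph{ultra}filter, via the complementary set — and this holds iff $f(c) = g(c)$ in $M^{\ast}$. Surjectivity is the definition of $N$. Preservation of the arithmetic structure is a {\L}o\'{s}-style computation: $[f]_U + [g]_U = [h]_U$ with $h$ the pointwise sum, whence $\Phi([h]_U) = h(c) = f(c) + g(c)$, and likewise for $\cdot$; and $[f]_U < [g]_U$ iff $\{\, i < a : f(i) < g(i) \,\} \in U$ iff $c$ lies in that set iff $f(c) < g(c)$. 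The only step requiring genuine care is securing $c$: one must check that finite fragments of the $U$-type are satisfied in $M$ itself (precisely where properness of the filter enters) and that the ultrafilter property — not merely the filter property — is in force for the injectivity and order-preservation arguments. Once $c$ is in hand, the rest is routine bookkeeping with codes of finite functions.
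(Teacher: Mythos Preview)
Your proposal is correct and follows essentially the same approach as the paper: pass to an $|M|^{+}$-saturated elementary extension, realize the type determined by $U$ at some $c<a$, take $N$ to be the set of values $f(c)$ for $M$-finite $f$ with domain $a$, and check that $[f]_U\mapsto f(c)$ is an isomorphism. The paper's proof is simply terser, leaving the verifications you spell out (closure of $N$, cofinality, well-definedness/injectivity via the ultrafilter property) to the reader.
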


\begin{proof}
Let $\mathbb{M}$ be an $|M|^+$-saturated elementary extension of $M$.
Then there exists $c$ s.t. $\mathbb{M} \models c \in X$ for every $X \in U$.
Let
\[
  N = \{f(c) \in \mathbb{M}: f \in M \models f \text{ is a function with domain } a\}.
\]
Then $N$ induces a submodel of $\mathbb{M}$ and $M \subseteq_{cf} N$.
It is easy to verify that $[f]_U \mapsto f(c)$ defines an isomorphism from $\Ult_U(M)$ onto $N$.
\end{proof}

Denote the model $N$ in Lemma \ref{lem:Ult-reprstn} by $M[c]$.

\begin{lemma}\label{lem:M[c]-CSigma}
Let $M$ be a model of $I\Sigma_n$ for some $n > 0$.
Suppose that
\begin{itemize}
 \item There exists a $\Sigma_{n+1}^M$-injection from $M$ into some $a \in M$;
 \item $N = M[c]$ for some $c < a \in M$;
 \item and $N$ is a $\Sigma_{n+1}$-elementary cofinal extension of $M$.
\end{itemize}
Then there also exists a $\Sigma_{n+1}^N$-injection from $N$ into $a$.
\end{lemma}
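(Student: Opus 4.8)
The plan is to transfer the $\Sigma_{n+1}^M$-injection $F\colon M\to a$ up to $N=M[c]$ by exploiting the structure of $N$ as a Skolem hull, as given by Lemma~\ref{lem:Ult-reprstn}. Every element of $N$ has the form $g(c)$ for some $M$-finite function $g$ with domain $a$; the idea is to assign to such an element a code built from $F$ applied to the values of $g$, together with enough bookkeeping to recover $g(c)$ uniquely. First I would apply Lemma~\ref{lem:ISigma-tame-inj} to $F$ (legitimate since $M\models I\Sigma_n$ and $\ran F\subseteq a\in M$) to obtain a $\Sigma_n^M$-function $G\colon a\times M\to M$ whose column maps $G(\cdot,s)$ are injective on $a$, whose pointwise limits enumerate $\dom F=M$, and which stabilises in the downward-closed fashion described in clause~(3). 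This replaces the given injection by a well-behaved $\Sigma_n$ approximation, which is what makes the transfer computable inside $N$.

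Next, working in $N$ (which models $I\Sigma_n$ by hypothesis and is a $\Sigma_{n+1}$-elementary cofinal extension of $M$), I would define the candidate injection $F^*\colon N\to a$ as follows. Given $d\in N$, write $d=g(c)$ with $g\in M$ an $M$-finite function on $a$. Since $c<a$, the value $g(c)$ is one of the finitely many values $g(0),\ldots,g(a-1)$, and $d\in N$ implies $d=\lim_s G(i,s)$ for some $i<a$ (as $M$ is cofinal in $N$ and $G$ enumerates $M$; here one uses that $d$ lies in $M$ when $g$ is, but in general $d$ need not be in $M$, so the bookkeeping must instead track which \emph{position} $c$ occupies). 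The cleaner route: for an $M$-finite $g$ on $a$, consider in $M$ the $M$-finite function $s\mapsto (\text{least } i<a \text{ such that }G(i,t)\text{ is eventually }g(j)\text{ for all coordinates }j)$ — more precisely, associate to $g$ the $M$-finite sequence recording, for each $j<a$, a stage and index witnessing $g(j)=\lim_s G(i_j,s)$ when that limit exists. Evaluating this derived $M$-function at $c$ lands us, by $\Sigma_{n+1}$-elementarity, at a genuine index $i<a$ with $d=\lim_s G(i,s)$ in $N$; set $F^*(d)=i$. That $F^*$ is $\Sigma_{n+1}^N$ follows from the Limit Lemma in $N$ applied to $G$ (which is $\Sigma_n$, hence $\Sigma_n^N$ by elementarity on its defining formula, modulo the usual absoluteness of bounded quantifiers); that $F^*$ is injective follows from clause~(1) of Lemma~\ref{lem:ISigma-tame-inj} together with clause~(3), exactly as the downward-closedness remark is used: if $d\neq d'$ both have limits, their stabilisation stages are comparable and the column map at the larger stage separates them.

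The main obstacle I anticipate is the interface between ``$d=g(c)$ for an $M$-finite $g$'' and ``$d=\lim_s G(i,s)$ in $N$''—i.e.\ showing that the index $i$ can be extracted \emph{definably and uniformly} over $N$ from the Skolem-term representation of $d$, rather than merely existing. This is where $\Sigma_{n+1}$-elementarity and cofinality do the real work: one writes down inside $M$ a $\Sigma_{n+1}$-definable partial map sending (a code for) an $M$-finite function $g$ on $a$ to the $G$-index of $\lim_s g(s)$ whenever defined, checks that on $M$ this map agrees with $F$ precomposed with the natural identification, and then observes that its value at $c$ is forced to be below $a$ and to name $d$ correctly by pulling the relevant $\Sigma_{n+1}$ statement down to $M$. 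Once this definability is in place, verifying totality of $F^*$ on all of $N$ (every element of $N$ is a $G$-limit because $M$ is cofinal in $N$ and $\dom F=M$) and injectivity (clauses (1) and (3)) is routine, and $N\models I\Sigma_n$ suffices to carry out the limit-lemma conversion making $F^*$ genuinely $\Sigma_{n+1}^N$. This completes the argument that a $\Sigma_{n+1}^N$-injection from $N$ into $a$ exists.
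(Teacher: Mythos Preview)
There is a genuine gap. Your plan is to set $F^*(d)=i$ where $d=\lim_s G^N(i,s)$, but totality of this map on $N$ is exactly the problem. The limits $\lim_s G(i,s)$ enumerate $\dom F = M$, not $N$; for $d\in N\setminus M$ there is no reason that $d$ should be a $G^N$-limit. Your attempted repair via the ``$M$-finite sequence recording, for each $j<a$, a stage and index witnessing $g(j)=\lim_s G(i_j,s)$'' fails because that sequence need not be $M$-finite: the map $j\mapsto (i_j,s_j)$ is only $\Sigma_{n+1}^M$-definable, and collapsing it to a coded sequence would need the stages $s_j$ to be bounded, which is precisely a $B\Sigma_{n+1}$-style collection that is unavailable here. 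The final line (``every element of $N$ is a $G$-limit because $M$ is cofinal in $N$ and $\dom F=M$'') is a non-sequitur: cofinality gives bounds, not membership.

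The paper's argument avoids this entirely by never trying to evaluate the injection at $d$ itself. Writing $\bar F(x)=\lim_s F(x,s)$ with $F$ a $\Sigma_n^M$-map (injective in each column on the part landing below $a$), the paper defines $G(x)$ in $N$ to be $\bar F^N(f)$ for the \emph{least} triple $(f,s,i)$ with $f(c)=x$ and $F^N(f,t)=i$ for all $t>s$. The point is that the code $f$ lies in $M$, so $\bar F^N(f)=\bar F(f)<a$ certainly exists by $\Sigma_{n+1}$-elementarity; minimising over $(f,s,i)$ is $\Pi_n$-least-element and hence available in $I\Sigma_n$, giving a $\Sigma_{n+1}^N$-map; and injectivity follows because distinct $x,y$ force distinct codes $f\neq g$, whence $F^N(f,u)\neq F^N(g,u)$ for large $u$. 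In short, the paper applies the old injection to the \emph{representing functions} rather than to the new elements, which is the missing idea in your approach.
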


\begin{proof}
Without loss of generality, assume that 
\begin{itemize}
 \item $F: M^2 \to M$ is defined by a $\Sigma_n^M$-formula $\varphi(x,s,y)$ in $M$,
 \item for each $s \in M$, $F(\cdot,s)$ is injective on $\{x \in M: F(x,s) < a\}$, and
 \item $\bar{F}(x) = \lim_s F(x,s)$ defines a $\Sigma_{n+1}^M$-injection from $M$ into $a$.
\end{itemize}
By elementarity, $\varphi$ also defines a function in $N$, denoted by $F^N: N^2 \to a$.
Moreover, for all $x \in M$,
\begin{equation}\label{eq:M[c]-CSigma}
  \lim_s F^N(x,s) = \lim_s F(x,s) = \bar{F}(x),
\end{equation}
and
\begin{equation}\label{eq:M[c]-CSigma-1}
  F^N(\cdot,s) \text{ is injective on } \{x \in N: F^N(x,s) < a\}.
\end{equation}

We work in $N$.
For each $x$, let $\psi(x,f,s,i)$ be the formula stating that
\[
  f(c) = x \wedge i < a \wedge \forall t > s, j < a \big( F^N(f,t) = j \to j = i \big).
\]
So $\psi$ is $\Pi_n^N$.
By $N = M[c]$ and \eqref{eq:M[c]-CSigma},
\[
  T(x) = \{(f,s,i): \psi(x,f,s,i)\} \neq \emptyset.
\]
Let $G(x)$ be the unique $i$ s.t. $(f,s,i) = \min T(x)$ for some $f,s$.
By $I\Sigma_n$, $G(x)$ is defined and $G(x) < a$.
Moreover, $G$ is $\Sigma_{n+1}^N$.

Suppose that $x$ and $y$ are distinct, $G(x) = i$ and $G(y) = j$.
Let $f,s,g,t$ be s.t.
\[
  (f,s,i) = \min T(x),\quad (g,t,j) = \min T(y).
\]
Then $f \neq g$, as $f(c) = x \neq y = g(c)$.
By \eqref{eq:M[c]-CSigma-1} and the definition of $G$, for sufficiently large $u$,
\[
  i = F^N(f,u) \neq F^N(g,u) = j.
\]
This shows that $G$ is injective.

Hence, $G: N \to a$ is a $\Sigma_{n+1}^N$-injection.
\end{proof}

\begin{proof}[Proof of Theorem \ref{thm:CSigma-GPHP}]
We begin with a countable model $M$ of $I\Sigma_n + \neg C\Sigma_{n+1}$.
Fix $a \in M$ s.t. there exists a $\Sigma_{n+1}^M$-injection from $M$ into $a$.
Also fix an increasing sequence $(b_k: k \in \mathbb{N})$ of elements of $M$ 
 and assume that $a < b_0$ and $(b_k: k \in \mathbb{N})$ is cofinal in $M$.
By \cite[Lemma 5.3]{BCWWY:2021} and Lemma \ref{lem:Ult-reprstn}, we build a sequence $(M_k: k \in \mathbb{N})$ s.t.
\begin{enumerate}
 \item $M_0 = M$;
 \item Each $M_{k+1}$ equals $M_k[c_k]$ for some $c_k$ and is a $\Sigma_{n+1}$-elementary cofinal extension of $M_k$;
 \item $[0,b_k]^{M_k} = [0,b_k]^{M_{k+1}}$;
 \item If $\varphi(x,y)$ is a $\Sigma_{n+1}^{M_k}$-formula defining an injection from $M_k$ into $a$
 then there exists $j > k$ s.t. $M_j \models \forall y \neg \varphi(d_j,y)$ for some $d_j \in M_j$.
\end{enumerate}
Let $N = \bigcup_{k \in \mathbb{N}} M_k$.
Then $N$ is a $\Sigma_{n+1}$-elementary cofinal extension of $M$.
By property (4) of $M_k$'s above, $N \models C\Sigma_{n+1}$.

By Lemma \ref{lem:M[c]-CSigma} and induction,
in each $M_k$ there exists a $\Sigma_{n+1}^{M_k}$-injection from $M_k$ into $a$.
In particular, in each $M_k$ there exists a $\Sigma_{n+1}^{M_k}$-injection from $b_k$ into $a$.
Since $[0,b_k]^{M_k} = [0,b_k]^N$ and $N$ is a $\Sigma_{n+1}$-elementary extension of $M_k$,
 there also exists a $\Sigma_{n+1}^N$-injection from $b_k$ into $a$.
So $N \not\models \GPHP(\Sigma_{n+1})$.
\end{proof}

\section{Finite Ramsey Theorem and Weak Pigeonhole Principle}

This section presents another separation.

\begin{theorem}\label{thm:FRT-WPHP}
For every countable $M \models I\Sigma_n$ with $n > 0$ and any $a \in M$,
there exists $N$ s.t. $M \preceq_{cf,\Sigma_{n+1}} N$, $[0,a]^M = [0,a]^N$ and
$N \models I\Sigma_n + \FRT(\Sigma_{n+1})$.

Thus,
$I\Sigma_n + \FRT(\Sigma_{n+1}) \not\vdash \WPHP(\Sigma_{n+1})$.
\end{theorem}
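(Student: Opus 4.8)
The plan is to iterate an ultrapower construction, exactly as in the proof of Theorem~\ref{thm:CSigma-GPHP}, but this time arranging that every $\Sigma_{n+1}$-definable coloring gets a homogeneous set of any prescribed finite size, while simultaneously preserving the failure of $\WPHP(\Sigma_{n+1})$. Concretely, I would begin with a countable $M \models I\Sigma_n + \neg\WPHP(\Sigma_{n+1})$ — such $M$ exist by Theorem~\ref{thm:knowns}(4), since $\WPHP(\Sigma_{n+1})$ is strictly stronger than $\CARD(\Sigma_{n+1})$ and fails in many models of $I\Sigma_n$ — and fix $a$ so that some $\Sigma_{n+1}^M$-injection maps $2a$ into $a$. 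Fix a cofinal increasing sequence $(b_k : k \in \mathbb{N})$ with $a < b_0$. The goal is a chain $(M_k : k \in \mathbb{N})$ with $M_0 = M$, each $M_{k+1} = M_k[c_k]$ a $\Sigma_{n+1}$-elementary cofinal extension via Lemma~\ref{lem:Ult-reprstn}, with $[0,b_k]^{M_k} = [0,b_k]^{M_{k+1}}$, and a bookkeeping condition handling Ramsey requirements.

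The Ramsey requirements are indexed by triples $(e, k_0, \varphi)$ where $\varphi$ defines a $\Sigma_{n+1}$-coloring $C : [y]^e \to k_0$ over some $M_k$, and by a target size $x \in M_k$; the requirement asks that some $M_j$ ($j > k$) contain a $\Sigma_{n+1}^{M_j}$-definable $C$-homogeneous set of size $x$. The key point is that in an ultrapower $M_k[c_k]$, the element $c_k$ behaves like a ``generic point'' of the $M_k$-ultrafilter $U_k$, so choosing $U_k$ appropriately lets us capture a large homogeneous set: by a pigeonhole argument on $\mathcal{P}(y)^{M_k}$ relative to the coloring, any $M_k$-finite $A \subseteq [y]^e$ of size $\geq$ (the relevant Ramsey number, which exists inside $M_k$ by $B\Sigma_{n+1}$ or just by $I\Sigma_1$ for standard $e, k_0$) has a homogeneous subset of size $x$, and one threads an appropriate such set through the limit-lemma mechanism of Lemma~\ref{lem:ISigma-tame-inj} so that the homogeneous set becomes $\Sigma_{n+1}$-definable in $M_j$ using $c_k$. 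One must also preserve, at every stage, that $2b_k$ still injects $\Sigma_{n+1}$-definably into $a$ inside $M_k$ — this is the analogue of Lemma~\ref{lem:M[c]-CSigma}: an ultrapower step $M_k \mapsto M_k[c_k]$ with $c_k < a$ transports a $\Sigma_{n+1}$-injection $M_k \to a$ to a $\Sigma_{n+1}$-injection $M_k[c_k] \to a$, and since $[0,b_k]$ is frozen, the restriction to $2b_k$ survives to $N$.

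Taking $N = \bigcup_k M_k$, one checks $M \preceq_{cf, \Sigma_{n+1}} N$ by the standard union-of-chain argument, $N \models I\Sigma_n$ by Theorem~\ref{thm:Paris-Ult} and the chain being $\Sigma_{n+1}$-elementary, $[0,a]^M = [0,a]^N$ by the freezing, and $N \models \FRT(\Sigma_{n+1})$ because every $\Sigma_{n+1}^N$-coloring with its parameters already lives in some $M_k$ (by cofinality and $\Sigma_{n+1}$-elementarity, a $\Sigma_{n+1}^N$-formula with parameters from $N$ has parameters in some $M_k$ and defines there a coloring of an $M_k$-initial segment), so the requirement for that coloring and each target size $x$ was met at some later stage, yielding a $\Sigma_{n+1}^N$-definable homogeneous set of size $x$. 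Since $N$ still admits a $\Sigma_{n+1}^N$-injection from $2b_k$ into $a$ for cofinally many $b_k$, and those $2b_k$ are unbounded, $N \not\models \WPHP(\Sigma_{n+1})$; the separation follows.

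The main obstacle is the Ramsey step itself: making the homogeneous set $\Sigma_{n+1}$-\emph{definable} in the extension. Merely having a large $M_k$-finite homogeneous set does not help, since $N$ need not contain it as an element; one genuinely needs the element $c_k$ to code enough information. The delicate part is choosing the $M_k$-ultrafilter $U_k$ so that the generic $c_k$ picks out, via a $\Sigma_{n+1}$-formula, a homogeneous set of the exact size $x$ — one expects to iterate the Łoś-style argument together with Lemma~\ref{lem:ISigma-tame-inj}, perhaps coding a homogeneous set as (the field of) a tame enumeration whose limit is controlled by $c_k$, and to verify that the relevant sets defining $U_k$ genuinely lie in $\mathcal{P}(a)^{M_k}$ so that Theorem~\ref{thm:Paris-Ult} applies. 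Balancing this against the requirement $c_k < a$ (needed to preserve $\neg\WPHP$) is where the real care is required, and it is presumably why the cited Lemma~5.3 of \cite{BCWWY:2021} is invoked in the companion construction.
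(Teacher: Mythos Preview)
Your proposal rests on a misreading of what $\FRT(\Sigma_{n+1})$ requires: the homogeneous set must be \emph{coded} (i.e., $M$-finite), not merely $\Sigma_{n+1}$-definable, so your stated ``main obstacle'' is aimed at the wrong target, and your claim that ``merely having a large $M_k$-finite homogeneous set does not help, since $N$ need not contain it as an element'' is simply false --- an $M_k$-finite set \emph{is} an element of $M_k \subseteq N$. The genuine subtlety is rather that an $M_k$-finite $H$ homogeneous for $\bar{C}^{M_k}$ need not remain homogeneous for $\bar{C}^N$, because the limit of the $\Sigma_n$-approximation can shift in the extension. Your plan to \emph{create} homogeneous sets in the extension via the generic $c_k$ does not address this, and it is not how the argument goes.

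The paper's key idea, which you miss entirely, is a dichotomy (Lemma~\ref{lem:FRT-WPHP}): writing $\bar{C} = \lim_s C(\cdot,s)$ on $[y]^e$ with $y = r^{(b)}(x,e,k)$ for a fixed nonstandard $b$, either (1) $M$ already contains an $M$-finite $H$ of size $x$ together with a stage $s$ after which the approximation has stabilized on $[H]^e$ --- a $\Pi_{n+1}$ condition that transfers to every $\Sigma_{n+1}$-elementary extension, so $H$ stays homogeneous in $N$ --- or (2) one can build an $M$-ultrafilter on $[y]^e$ whose generic tuple $\vec{c}$ has $\lim_s C^N(\vec{c},s)$ divergent, so that in the ultrapower the formula no longer defines a total coloring and the requirement becomes vacuous. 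The nonstandardly many iterations of the Ramsey bound give room for a shrinking sequence $(X_i)$ that alternately forces instability of $C(\cdot,s)$ for each $s$ and homogeneity for each $M$-finite map $[y]^e \to a$ (the latter is what freezes $[0,a]$; no analogue of Lemma~\ref{lem:M[c]-CSigma} is needed). You never consider \emph{destroying} the coloring; you only try to satisfy it, and that is the missing idea.
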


By finite combinatorics, there exists a primitive recursive function $r$ s.t.
\[
  r(x,e,k) = \min\{y: y \to (x)^e_k\}.
\]
In $I\Sigma_1$, define
\[
  r^{(0)}(x,e,k) = x,\quad
  r^{(m+1)}(x,e,k) = r(r^{(m)}(x,e,k),e,k).
\]

The construction of $N$ in the above theorem needs the following technical lemma.

\begin{lemma}\label{lem:FRT-WPHP}
Suppose that 
\begin{itemize}
 \item[(a)] $M$ is a countable model of $I\Sigma_n$ with $n > 0$,
 \item[(b)] $a,b,c,e,x$ are elements of $M$, $b > \mathbb{N}$, $k = \max\{a,c\}$,
 \item[(c)] $\varphi$ is a $\Sigma_n^M$-formula defining $C: [y]^e \times M \to c$ 
 with $y = r^{(b)}(x,e,k)$, and
 \item[(d)] $\bar{C}(\vec{d}) = \lim_s C(\vec{d},s)$ exists for all $\vec{d} \in [y]^e$.
\end{itemize}
Then either one of the followings holds,
\begin{enumerate}
 \item[(1)] There exist $s \in M$, $H \in [y]^x \cap M$ and $i < c$, s.t.
 $\bar{C}(\vec{d}) = C(\vec{d},t) = i$ for all $\vec{d} \in [H]^e$ and $t > s$.
 \item[(2)] There exists $N$, s.t. $M \preceq_{cf,\Sigma_{n+1}} N$, 
 $[0,a]^N = [0,a]^M$, $\lim_s C^N(\cdot,s)$ is undefined on some $\vec{d} \in [y]^e \cap N$ where $C^N$ is the map defined by $\varphi$ in $N$.
\end{enumerate}
\end{lemma}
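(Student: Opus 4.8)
The plan is to decide whether a Paris‑style ultrapower can destroy the stabilisation of $C$: if it can we obtain~(2), and the obstruction to its doing so yields a tame homogeneous set in $M$, i.e.~(1). Work in $M$ and for $s\in M$ set
\[
  D_s=\{\vec d\in[y]^e:\exists t>s\ (C(\vec d,t)\neq C(\vec d,s))\}.
\]
Since $C$ is a total function with $\Delta_n$ graph, each $D_s$ is a bounded $\Sigma_n$‑definable set, hence $M$‑finite by $I\Sigma_n$; an elementary case split gives $D_{s'}\subseteq D_s$ for $s\le s'$; and $\bigcap_{s\in M}D_s=\emptyset$, because by~(d) every $\vec d\in[y]^e$ leaves $D_s$ once $C(\vec d,\cdot)$ has stabilised. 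The dichotomy I would use is: \emph{either} there are $m\in\mathbb{N}$ and an $M$‑finite $G\colon[y]^e\to a^m$ with $\bigcap_{s}G(D_s)=\emptyset$ (``blocked''), \emph{or} there are no such $m,G$ (so in particular $D_s\neq\emptyset$ for all $s$).

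In the blocked case I would derive~(1) by iterated finite Ramsey, which together with the primitive recursion defining $r$ is available in $I\Sigma_1\subseteq I\Sigma_n$. Using $b>\mathbb{N}$ one verifies
\[
  r\big(r(x,e,c),e,a^m\big)\ \le\ r\big(r(x,e,k),e,k^m\big)\ \le\ r^{(m+1)}(x,e,k)\ \le\ r^{(b)}(x,e,k)=y,
\]
where the middle inequality reduces a $k^m$‑colouring to $m$ nested $k$‑colourings, the first uses $c,a\le k$, and the last uses $m<b$ and the monotonicity of $r^{(\cdot)}$. So $y\to(r(x,e,c))^e_{a^m}$ holds in $M$, giving a $G$‑homogeneous $H'\in[y]^{r(x,e,c)}\cap M$, say with value $\vec v\in a^m$. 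Pick $s_0$ with $\vec v\notin G(D_{s_0})$ (possible since $\bigcap_sG(D_s)=\emptyset$); then $[H']^e\subseteq G^{-1}(\vec v)$ forces $[H']^e\cap D_{s_0}=\emptyset$, i.e.\ $C(\vec d,t)=C(\vec d,s_0)=\bar C(\vec d)$ for all $\vec d\in[H']^e$ and $t>s_0$. Since $|H'|=r(x,e,c)$ we have $H'\to(x)^e_c$, so the $M$‑finite colouring $\vec d\mapsto C(\vec d,s_0)$ is constant, with some value $i$, on $[H]^e$ for some $H\in[H']^x\subseteq[y]^x\cap M$; then $H$, $s_0$ and $i$ witness~(1).

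In the unblocked case I would build $N$ as a Paris ultrapower. Fix an $M$‑finite coding of $[y]^e$ by an interval $[a,a')$ of $M$, arranged so that all codes lie above $a$; let $\hat D_s\subseteq[a,a')$ and $Y\subseteq[a,a')$ be the code sets of $D_s$ and of $[y]^e$. Then $(\hat D_s)_s$ is a decreasing chain of nonempty $M$‑finite sets with empty intersection, and, by the unblocked hypothesis with $m=1$, every $M$‑finite $g\colon[a,a')\to a$ satisfies $\bigcap_s g(\hat D_s)\neq\emptyset$. Using the countability of $M$ and the ultrafilter construction of \cite[Lemma~5.3]{BCWWY:2021} (in the style of the proof of Theorem~\ref{thm:CSigma-GPHP}), I would choose an $M$‑ultrafilter $U$ on $[a,a')$ with every $\hat D_s\in U$ and with every $M$‑finite map $[a,a')\to a$ constant modulo $U$; the latter forces $[0,a]^N=[0,a]^M$, where $N:=\Ult_U(M)$. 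By Lemma~\ref{lem:Ult-reprstn} and Theorem~\ref{thm:Paris-Ult}, $N=M[c]$ is a $\Sigma_{n+1}$‑elementary cofinal extension of $M$ satisfying $I\Sigma_n$; $U$ is non‑principal because its generating filter has empty intersection, and since $Y\in U$ the element $\vec d^*\in[y]^e\cap N$ coded by $c$ is new. Finally, writing each $D_s$ as the finite set coded by an $E_s\in M$ and using $\Sigma_{n+1}$‑elementarity to identify $D^N_s$ with the set decoded from $E_s$ in $N$, membership of $\vec d^*$ in $D^N_s$ becomes a bounded statement whose fibre over $[a,a')$ is exactly $\hat D_s\in U$; by the fundamental (Łoś) property of the ultrapower for bounded formulas, $\vec d^*\in D^N_s$ for every $s\in M$. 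The monotonicity $D^N_{s'}\subseteq D^N_s$ ($s\le s'$) holds outright in $N$, so by cofinality of $M$ in $N$ we get $\vec d^*\in D^N_s$ for \emph{all} $s\in N$; that is, after every $s$ the value $C^N(\vec d^*,\cdot)$ changes, so $\lim_s C^N(\vec d^*,s)$ does not exist, which is~(2).

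The step I expect to be the main obstacle is the ultrafilter construction in the unblocked case: one has to force all the code sets $\hat D_s$ into $U$ while simultaneously collapsing every $M$‑finite map into $a$ to a constant (so as to preserve $[0,a]$), and this is precisely where $b>\mathbb{N}$ is needed — it guarantees that $y=r^{(b)}(x,e,k)$ is large enough to absorb, via Ramsey, all the maps $[y]^e\to a^m$ with $m$ standard, and the dichotomy is designed so that the only way this room can run out (some $\bigcap_sG(D_s)=\emptyset$) is the blocked case, which instead delivers~(1). The remaining ingredients ($M$‑finiteness of $D_s$, the monotonicity facts, the bounded‑formula transfer, and the combinatorial estimates for $r$) are routine.
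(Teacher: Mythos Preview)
Your blocked case is correct, including the Ramsey estimate $r(r(x,e,c),e,a^m)\le r^{(m+1)}(x,e,k)\le y$. The unblocked case, however, has a genuine gap: the ultrafilter you describe need not exist under your hypothesis. To put every $\hat D_s$ and a fibre $g_j^{-1}(v_j)$ for each $M$-finite $g_j\colon[y]^e\to a$ into $U$, you must choose $v_0,v_1,\ldots$ so that $D_s\cap\bigcap_{j<m}g_j^{-1}(v_j)\neq\emptyset$ for all $s$ and all $m$. Your unblocked hypothesis only gives, for each $m$, \emph{some} tuple in $\bigcap_s G_m(D_s)$; it says nothing about $\bigcap_s g_m\bigl(D_s\cap\bigcap_{j<m}g_j^{-1}(v_j)\bigr)$ once earlier fibres have been fixed. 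The tree of good tuples has nodes at every level, but its branching is bounded by $a$, which may be nonstandard, so K\"onig's lemma does not produce a branch. The appeal to \cite[Lemma~5.3]{BCWWY:2021} does not help either: as used in the proof of Theorem~\ref{thm:CSigma-GPHP}, that lemma builds a single ultrapower step preserving an initial segment while killing one $\Sigma_{n+1}$-injection; it does not force an externally prescribed decreasing tower into the filter.

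The paper's argument avoids this coherence problem by carrying an \emph{internal} invariant through the construction. It simply assumes (1) fails and builds a chain $y=X_0\supseteq X_1\supseteq\cdots$ of $M$-finite subsets of $y$ with $|X_i|\ge r^{(b-i)}(x,e,k)$. A short Claim (colour $[X_i]^e$ by ``stable at $s$''/``unstable at $s$'', apply Ramsey, and use the failure of (1) to rule out the stable side) shows that for any $s$ one may pass to $X_{i+1}$ with $[X_{i+1}]^e\subseteq D_s$ at the cost of one power of $r$; the usual Ramsey step, at the same cost, makes $X_{i+1}$ homogeneous for a given $M$-finite $f\colon[y]^e\to a$. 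Interleaving these two moves over a countable enumeration and using $b>\mathbb N$ yields an ultrafilter generated by the $[X_i]^e$. The point is that the size bound $|X_i|\ge r^{(b-i)}(x,e,k)$ is robust under both moves, whereas your ``unblocked'' condition is a statement about the full $D_s$'s and is not preserved once you start intersecting with fibres. Your closing paragraph correctly intuits that Ramsey and $b>\mathbb N$ are needed \emph{inside} the construction; the fix is to use them there rather than front-loading them into the dichotomy.
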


\begin{proof}
Fix parameters satisfying (a-d).
We assume that clause (1) fails and 
define an $M$-ultrafilter on $[y]^e$ s.t. $N = \Ult_U(M)$ satisfies clause (2).
Below we work in $M$.
Firstly, we prove a claim.

\medskip

\begin{claim}
If $s$ is any element and
$X$ is an $M$-finite subset of $y$ with cardinality at least some $r^{(m)}(x,e,k)$ with $m > 1$
then there exists an $M$-finite subset $Y$ of $X$ s.t.
$|Y| \geq r^{(m-1)}(x,e,k)$ and
\[
  \forall \vec{d} \in [Y]^e \exists t > s (C(\vec{d},t) \neq C(\vec{d},s)).
\]
\end{claim}

To prove the claim, let
\[
  A = \left\{ \vec{d} \in [X]^e: \forall t > s, i,j
    \left( C(\vec{d},s) = i \wedge C(\vec{d},t) = j \to i = j \right) \right\}.
\]
Then $A$ is $\Pi_n^M$ and thus $M$-finite.
Define $D: [X]^e \to 2$ as follows
\[
  D(\vec{d}) =
  \begin{cases}
    1 & \vec{d} \in A \\
    0 & \vec{d} \not\in A
  \end{cases}.
\]
By the definition of $r$, 
$X$ contains an $M$-finite subset $Y$ s.t. $|Y| \geq r^{(m-1)}(x,e,k)$ and $Y$ is $D$-homogeneous.
If $D([Y]^e) = \{1\}$ then
\[
  \left\{ (\vec{d},\bar{C}(\vec{d})): \vec{d} \in [Y]^e \right\} =
  \left\{ (\vec{d},C(\vec{d},s)): \vec{d} \in [Y]^e \right\},
\]
and thus the restriction of $\bar{C}$ to $[Y]^e$ is an $M$-finite coloring.
As $m-1 > 0$, this would imply $Y$ containing an $M$-finite $H$ satisfying clause (1).
So $D([Y]^e) = \{0\}$ and $Y$ satisfies the conclusion of the claim.

\medskip

By the claim above, we can build a sequence $(X_i: i \in \mathbb{N})$ s.t.
\begin{itemize}
 \item $X_0 = y$;
 \item Each $X_i$ is an $M$-finite set of cardinality $\geq r^{(b-i)}(x,e,k)$,
 \item $X_{i+1} \subseteq X_i$,
 \item For each $M$-finite $f: [y]^e \to a$ 
 there exists $i$ s.t. $X_i$ is $f$-homogeneous,
 \item For each $s \in M$, there exists $X_i$ s.t.
 \[
    M \models \forall \vec{d} \in [X_i]^e \exists t > s (C(\vec{d},t) \neq C(\vec{d},s)).
 \]
\end{itemize}
Let $U$ be any $M$-ultrafilter on $[y]^e$ containing every $[X_i]^e$.
Then $N = \Ult_U(M)$ satisfies clause (2).
\end{proof}

\begin{proof}[Proof of Theorem \ref{thm:FRT-WPHP}]
Fix a countable $M \models I\Sigma_n$ with $n > 0$ and $a \in M$.
Also fix some $b \in M - \mathbb{N}$.
By Lemma \ref{lem:FRT-WPHP}, we can build a sequence $(M_i: i \in \mathbb{N})$ s.t.
\begin{itemize}
 \item $M_0 = M$;
 \item $M_i \preceq_{cf,\Sigma_{n+1}} M_{i+1}$;
 \item $[0,a]^M = [0,a]^{M_i}$;
 \item If $x$ and $c$ are elements of $M_i$, 
 $y = r^{(b)}(x,e,k)$ with $k = \max\{a,c\}$ and
 $\varphi$ is a $\Sigma_n^{M_i}$-formula defining a function $C: [y]^e \times M \to c$ 
 s.t. $\bar{C}(\vec{d}) = \lim_s C(\vec{d},s)$ exists for all $\vec{d} \in [y]^e$,
 then one of the followings holds
 \begin{itemize}
  \item[(i)] there exist an $M_i$-finite $H$ and $s \in M_i$ s.t. $|H| \geq x$,
  \[
    M_i \models \forall t > s,\vec{d} \in [H]^e (C(\vec{d},t) = C(\vec{d},s)) \wedge H \text{ is homogeneous for } \bar{C},
  \]
  \item[(ii)] there exists $j > i$ s.t.
  \[
    M_j \models \exists \vec{d} \in [y]^e \forall s \exists t (t > s \wedge C^{M_j}(\vec{d},t) \neq C^{M_j}(\vec{d},s)),
  \]
  where $C^{M_j}$ is the function defined by $\varphi$ in $M_j$.
 \end{itemize}
\end{itemize}
Let $N = \bigcup_{i \in \mathbb{N}} M_i$.
Then $N \models I\Sigma_n$, $N$ is a $\Sigma_{n+1}$-elementary cofinal extension of $M$ and $[0,a]^N = [0,a]^M$.

For $x,e,c \in N$, let $y = r^{(b)}(x,e,k)$ where $k = \max\{a,c\}$.
Suppose that $\bar{C}: [y]^e \to c$ is $\Sigma_{n+1}^N$.
By Limit Lemma, there exists $C: [y]^e \times N \to c$
 s.t. $C$ is defined by a $\Sigma_n^N$-formula $\varphi$ and $\bar{C}(\vec{d}) = \lim_s C(\vec{d},s)$ for all $\vec{d} \in [y]^e$.
Let $i$ be s.t. $M_i$ contains $x,e,c$ and all the parameters of $\varphi$.
Then $M_i$ also contains $k$ and $y$.
As $M_i \preceq_{\Sigma_{n+1}} N$, in $M_i$, $\varphi$ also defines a function $C^{M_i}: [y]^e \times M_i \to c$,
and $\bar{C}^{M_i}(\vec{d}) = \lim_s C^{M_i}(\vec{d},s)$ exists for all $\vec{d} \in [y]^e \cap M_i$.
There are two cases.

\emph{Case 1}, (i) holds for $M_i$ and $C^{M_i}$.
Then there exists an $M_i$-finite $H$ and $s \in M_i$, s.t. $|H| \geq x$ and
\[
  M_i \models \forall t > s,\vec{d} \in [H]^e (C^{M_i}(\vec{d},t) = C^{M_i}(\vec{d},s)) \wedge H \text{ is homogeneous for } \bar{C}^{M_i}.
\]
As $M_i \preceq_{\Sigma_{n+1}} N$,
\[
  N \models \forall t > s,\vec{d} \in [H]^e \big( C(\vec{d},t) = C(\vec{d},s)) \big).
\]
It follows that $H$ is $\bar{C}$-homogeneous in $N$.

\emph{Case 2}, (ii) holds for $M_i$ and $C^{M_i}$.
Then there exist $j > i$ and $\vec{d} \in [y]^e \cap M_j$ s.t.
\[
  M_j \models \forall s \exists t( C^{M_j}(\vec{d},t) \neq C^{M_j}(\vec{d},s) ),
\]
where $C^{M_j}$ is the function defined by $\varphi$ in $M_j$.
By $M_j \preceq_{\Sigma_{n+1}} N$,
\[
  N \models \forall s \exists t( C(\vec{d},t) \neq C(\vec{d},s) ),
\]
contradicting the assumption on $C$.
This shows that $N \models \FRT(\Sigma_{n+1})$.

Finally, if in the model $M$ above there exists a $\Sigma_{n+1}^M$-injection $F: 2a \to a$,
then $[0,2a]^N = [0,2a]^M$ as $[0,a]^N = [0,a]^M$,
and the $\Sigma_{n+1}^M$-formula defining $F$ in $M$ also defines the same injection in $N$ by $M \preceq_{\Sigma_{n+1}} N$.
Hence
\[
  N \models I\Sigma_n + \FRT(\Sigma_{n+1}) + \neg \WPHP(\Sigma_{n+1}).
\]
\end{proof}

The next corollary follows easily.

\begin{corollary}
Over $I\Sigma_{n}$ ($0 < n \in \mathbb{N})$,
$\GPHP(\Sigma_{n+1})$ is strictly between $\CARD(\Sigma_{n+1})$ and $\WPHP(\Sigma_{n+1})$.
\end{corollary}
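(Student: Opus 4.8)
The plan is to obtain the corollary by assembling the results already established, so the proof is an assembly of those results rather than fresh work. First I would recall the two positive implications: over $I\Sigma_n$ — indeed already over $I\Sigma_1$, by Theorem~\ref{thm:knowns}(1) and the basic implications recorded at the start of Section~3 —
\[
  \WPHP(\Sigma_{n+1}) \;\longrightarrow\; \GPHP(\Sigma_{n+1}) \;\longrightarrow\; \CARD(\Sigma_{n+1}).
\]
This already places $\GPHP(\Sigma_{n+1})$ between the other two principles, so only the two strictness claims remain.

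For strictness on the $\CARD(\Sigma_{n+1})$ side, I would quote Theorem~\ref{thm:CSigma-GPHP} directly: it asserts precisely that $I\Sigma_n + \CARD(\Sigma_{n+1}) \not\vdash \GPHP(\Sigma_{n+1})$, that is, there is a model of $I\Sigma_n + \CARD(\Sigma_{n+1}) + \neg\GPHP(\Sigma_{n+1})$.

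For strictness on the $\WPHP(\Sigma_{n+1})$ side, I would take the model $N$ produced by Theorem~\ref{thm:FRT-WPHP}: a countable $N$ with $N \models I\Sigma_n + \FRT(\Sigma_{n+1}) + \neg\WPHP(\Sigma_{n+1})$. Now $\FRT(\Sigma_{n+1})$ is by definition $\forall e\,\forall k\,\FRT^e_k(\Sigma_{n+1})$, so in particular $N \models \FRT^1(\Sigma_{n+1})$; and Theorem~\ref{thm:GPHP-FRT} gives $\FRT^1(\Sigma_{n+1}) \leftrightarrow \GPHP(\Sigma_{n+1})$ over $I\Sigma_n$. Hence $N \models \GPHP(\Sigma_{n+1})$, and $N$ witnesses $I\Sigma_n + \GPHP(\Sigma_{n+1}) \not\vdash \WPHP(\Sigma_{n+1})$.

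Combining these four facts proves the corollary. I do not expect any genuine obstacle; the only point needing a moment's attention is that the implication $\FRT(\Sigma_{n+1}) \to \GPHP(\Sigma_{n+1})$ invoked in the last step is available over the base theory $I\Sigma_n$ alone (which it is, through Theorem~\ref{thm:GPHP-FRT}), so that the separating models supplied by Theorems~\ref{thm:CSigma-GPHP} and~\ref{thm:FRT-WPHP} really do establish the strictness over $I\Sigma_n$ as stated.
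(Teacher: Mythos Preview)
Your proposal is correct and is exactly the assembly the paper has in mind: the forward implications from Theorem~\ref{thm:knowns}(1), the separation of $\CARD(\Sigma_{n+1})$ from $\GPHP(\Sigma_{n+1})$ via Theorem~\ref{thm:CSigma-GPHP}, and the separation of $\GPHP(\Sigma_{n+1})$ from $\WPHP(\Sigma_{n+1})$ via Theorem~\ref{thm:FRT-WPHP} combined with Theorem~\ref{thm:GPHP-FRT}. The paper itself simply records that the corollary ``follows easily'' without writing out the steps, so your explicit verification is precisely what is needed.
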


\section{Finite Combinatorics and Approximations}

Recall that for a partial $\Sigma_n$-function $f$ and a number $b$,
an \emph{approximation} to $f$ of length $b$ is a sequence $s$,
 s.t., $s$ is of length $b$ and
\[
  \forall i < b-1 \forall x \leq s(i) \forall y(y = f(x) \to y \leq s(i+1)).
\]
For $n > 0$, $P\Sigma_{n}$ is the $\Sigma_{n+2}$-sentence stating that for every partial $\Sigma_n$-function $f$ and every $b$ there exists an approximation to $f$ of length $b$.
Like $B\Sigma_{n+1}$, $P\Sigma_n$ also lies strictly between $I\Sigma_n$ and $I\Sigma_{n+1}$.
Moreover, $P\Sigma_n$ and $B\Sigma_{n+1}$ are independent over $I\Sigma_n$, and even $B\Sigma_{n+1} + P\Sigma_{n+1}$ is strictly weaker than $I\Sigma_{n+1}$.
For details, the reader can consult \cite[Chapter IV]{HP:1998.book}.

In particular, $P\Sigma_1$ has proven important in the reverse mathematics of Ramsey's Theorem for pairs and related combinatorial principles (e.g., see \cite{KrYo:2016.JML, CWY:2024.TT, HoPaYo:2025.TAMS}).
This might make it natural to study the finite combinatorial principles formalized here in the presence of $P\Sigma_n$.

\begin{theorem}\label{thm:PSigma}
For positive integer $n$,
\[
  I\Sigma_n + P\Sigma_n + \WPHP(\Sigma_{n+1}) + \FRT(\Sigma_{n+1}) \not\vdash B\Sigma_{n+1}.
\]
\end{theorem}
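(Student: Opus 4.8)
The plan is to adapt the iterated $\Sigma_{n+1}$-elementary cofinal ultrapower construction of Theorems \ref{thm:CSigma-GPHP} and \ref{thm:FRT-WPHP}: start from a base model that already carries a fixed failure of $B\Sigma_{n+1}$, and force $\FRT(\Sigma_{n+1})$ into it while keeping $P\Sigma_n$, $\WPHP(\Sigma_{n+1})$, and that failure intact. First fix a countable $M \models I\Sigma_n + P\Sigma_n + \WPHP(\Sigma_{n+1}) + \neg B\Sigma_{n+1}$; such $M$ exists, obtained by starting with a model of $I\Sigma_n + P\Sigma_n + \neg B\Sigma_{n+1}$ — available since $P\Sigma_n$ and $B\Sigma_{n+1}$ are independent over $I\Sigma_n$ (\cite[Ch.~IV]{HP:1998.book}) — and applying the $\WPHP(\Sigma_{n+1})$-forcing ultrapower construction of \cite{BCWWY:2021} (see Theorem \ref{thm:knowns}(4)), along which $\neg B\Sigma_{n+1}$ and $P\Sigma_n$ are preserved, the latter by the argument below. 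Fix also a witness to $\neg B\Sigma_{n+1}$: a $\Sigma_{n+1}$-formula $\psi_*(u,v)$ and $b_* \in M$ with $M \models \forall u < b_*\,\exists v\,\psi_*(u,v)$ but with no $c \in M$ satisfying $M \models \forall u < b_*\,\exists v < c\,\psi_*(u,v)$; and take the distinguished parameter $a$ of the construction to be any $a \ge b_*$.

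Now build $(M_i : i \in \mathbb{N})$ with $M_0 = M$, each $M_{i+1}$ a $\Sigma_{n+1}$-elementary cofinal ultrapower of $M_i$ with $[0,a]^{M_{i+1}} = [0,a]^{M_i}$, and with the bookkeeping arranged to drive $\FRT(\Sigma_{n+1})$ exactly as in the proof of Theorem \ref{thm:FRT-WPHP} via Lemma \ref{lem:FRT-WPHP}; set $N = \bigcup_i M_i$. By Theorem \ref{thm:Paris-Ult} and the preservation of $[0,a]$, $N \models I\Sigma_n$, $M \preceq_{cf,\Sigma_{n+1}} N$, $[0,a]^N = [0,a]^M$, and, as in the proof of Theorem \ref{thm:FRT-WPHP}, $N \models \FRT(\Sigma_{n+1})$. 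Since $[0,b_*)^N = [0,b_*)^M$ and $\Sigma_{n+1}\cup\Pi_{n+1}$-truth transfers between $M$ and $N$, the formula $\psi_*$ still witnesses $\neg B\Sigma_{n+1}$ in $N$ (given $c \in N$ choose $c' \ge c$ in $M$ and lift to $N$ the $\Pi_{n+1}$ fact $M \models \forall v < c'\,\neg\psi_*(u_0,v)$ for the $M$-witness $u_0 < b_*$). Also $N \models P\Sigma_n$, because $M_i \models P\Sigma_n$ for all $i$ (Theorem \ref{thm:Paris-Ult} together with the lemma below) and ``$s$ is an approximation of length $b$ to $f$'' is $\Pi_n$, hence absolute between each $M_i$ and $N$. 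So there remains only to see $N \models \WPHP(\Sigma_{n+1})$.

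The lemma just used is: if $M' \models I\Sigma_n + P\Sigma_n$ and $U$ is an $M'$-ultrafilter, then $\Ult_U(M') \models P\Sigma_n$ (extending Theorem \ref{thm:Paris-Ult}). To prove it, say $U$ is on $[y]^e$ and we are given in $\Ult_U(M')$ a partial $\Sigma_n$-function $\varphi(\,\cdot\,,[\vec g]_U)$ and a length $[h]_U$. In $M'$ form the single partial $\Sigma_n$-function $\hat f(\langle \vec d,v\rangle) = \varphi(v,\vec g(\vec d))$ for $\vec d \in [y]^e$; applying $P\Sigma_n$ in $M'$ to $\hat f$ with a sufficiently long length produces, after one further step, an $m \in M'$ bounding $\varphi(v,\vec g(\vec d))$ for all $\vec d \in [y]^e$ and all $v < m$. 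Then the fibrewise least approximations $s^*_{\vec d}$ (of length $h(\vec d)$ to $\varphi(\,\cdot\,,\vec g(\vec d))$) are uniformly bounded, so $\vec d \mapsto s^*_{\vec d}$ is $M'$-finite, and by {\L}o\'{s}'s theorem for $\Pi_n$-formulas $[\,\vec d \mapsto s^*_{\vec d}\,]_U$ is an approximation of length $[h]_U$ to $\varphi(\,\cdot\,,[\vec g]_U)$ in $\Ult_U(M')$.

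The remaining point, preserving $\WPHP(\Sigma_{n+1})$ through the ultrapowers that force $\FRT(\Sigma_{n+1})$, is where I expect the main difficulty: these are taken on $[y]^e$ for huge towers $y$ and are not designed for pigeonhole, and ``$F$ is a total injection from $2x$ into $x$'' is, for a $\Sigma_{n+1}$-definable $F$, a $\Pi_{n+2}$-statement, so bare {\L}o\'{s} need not transfer a counterexample downward. I would address this by first replacing any candidate injection by its $\Sigma_n$-approximation (Shoenfield's Limit Lemma) together with the associated tame $\Sigma_n$-enumeration of its domain (Lemma \ref{lem:ISigma-tame-inj}), which lowers the relevant totality and injectivity statements about the newly added elements to complexity $\le \Sigma_{n+1}$, so that a $\Sigma_{n+1}$-injection $2x \to x$ in $\Ult_U(M_i)$ does pull back to one in $M_i$ — impossible, as $M_i \models \WPHP(\Sigma_{n+1})$; and, if any residue of the complexity gap remains, by requiring each ultrafilter to contain, in addition to the homogeneous sets demanded by Lemma \ref{lem:FRT-WPHP}, for every candidate injection a homogeneous set forcing it to be non-total or non-injective — no homogeneous set can force it to be a genuine injection $2x \to x$, since that would already refute $\WPHP(\Sigma_{n+1})$ in $M_i$. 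Closing this enlarged bookkeeping — above all, choosing $y$ large enough at each stage relative to the values $x$ of candidate injections, which become available only after the ultrafilter is fixed, while keeping $[0,a]$ fixed — is the step I expect to need the most care. With it in hand, $N \models I\Sigma_n + P\Sigma_n + \WPHP(\Sigma_{n+1}) + \FRT(\Sigma_{n+1}) + \neg B\Sigma_{n+1}$, which gives the theorem.
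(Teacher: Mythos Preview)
Your overall architecture is close to the paper's, but the point you flag as ``the main difficulty'' is a genuine gap, and the paper sidesteps it rather than solving it. You try to force $\WPHP(\Sigma_{n+1})$ once at the start and then \emph{preserve} it through the $\FRT$-forcing ultrapowers. Your first proposed fix---pulling a $\Sigma_{n+1}$-injection $2x\to x$ from $\Ult_U(M_i)$ back to $M_i$ via Limit Lemma and Lemma~\ref{lem:ISigma-tame-inj}---does not work: totality of $\bar F$ on $2x$ remains $\Pi_{n+2}$ (it is $\forall z<2x\,\exists s\,\forall t>s\,(F(z,t)=F(z,s))$), and in any case the defining parameters of $\bar F$ may be new in the ultrapower, so there is nothing to reflect down. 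Your second fix---loading extra ``kill this injection'' conditions onto the same ultrafilter on $[y]^e$---is the right instinct but not a proof: the $\WPHP$ requirements live on arbitrary intervals $[0,2x)$ with arbitrary $\Sigma_{n+1}$ parameters from the ultrapower, and you give no mechanism for meeting them inside an ultrafilter whose underlying set $[y]^e$ and whose homogeneity constraints were fixed for a single $\FRT$ task.

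The paper avoids preservation entirely. It starts only from a countable $M\models P\Sigma_n+\neg B\Sigma_{n+1}$ and \emph{interleaves} two kinds of steps in the chain $(M_i)$: $\FRT$-steps via Lemma~\ref{lem:FRT-WPHP}, and $\WPHP$-steps via the ultrapower construction of \cite[Theorem~5.10]{BCWWY:2021}. Each requirement (an $\FRT$ instance or a candidate $\Sigma_{n+1}$-injection $2x\to x$) is handled at its own stage by the appropriate kind of ultrapower; no step is asked to respect the other principle. The union $N$ is then a $\Sigma_{n+1}$-elementary cofinal extension of $M$ satisfying $I\Sigma_n+\WPHP(\Sigma_{n+1})+\FRT(\Sigma_{n+1})+\neg B\Sigma_{n+1}$. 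Finally, $P\Sigma_n$ transfers to $N$ in one stroke by Lemma~\ref{lem:Approx-Cof-ext} (any $\Sigma_{n+1}$-elementary cofinal extension preserves $P\Sigma_n$); this also replaces your ad hoc ultrapower lemma, whose key step (``an $m\in M'$ bounding $\varphi(v,\vec g(\vec d))$ for all $\vec d$ and all $v<m$'') is circular as written and in any case is subsumed by Lemmata~\ref{lem:PSigma-variants} and~\ref{lem:Approx-Cof-ext}.
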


The proof of this theorem is based on the following two lemmata,
which are from an unfinished joint project of Chong, Yang and the author.

Firstly, a variant of approximations to partial functions is needed.
For a partial $\Sigma_n$-function $f$ and a total function $g$,
 a \emph{$g$-stretched approximation} to $f$ of length $b$ is a sequence $s$,
 s.t., $s$ is of length $b$ and
\[
  \forall i < b-1 \forall x \leq g(s(i)) \forall y(y = f(x) \to y \leq s(i+1)).
\]
Stretched approximations lead to a slightly different formulation of $P\Sigma_n$.

\begin{lemma}[Chong, Wang and Yang]\label{lem:PSigma-variants}
For $n > 0$, the followings are equivalent over $I\Sigma_n$,
\begin{enumerate}
 \item $P\Sigma_n$;
 \item For every partial $\Sigma_n$-function $f$ and every pair $a$ and $b$,
  there exists an approximation $s$ to $f$ s.t. $s(0) \geq a$ and $s$ is of length $b$;
 \item If $f$ is a partial $\Sigma_n$-function and $g$ is a total $\Sigma_n$-function,
  then for every $b$ there exists a $g$-stretched approximation to $f$ of length $b$.
\end{enumerate}
\end{lemma}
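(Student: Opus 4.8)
The plan is to prove the cycle of implications $(1)\Rightarrow(2)\Rightarrow(3)\Rightarrow(1)$, arguing throughout inside a fixed model of $I\Sigma_n$. The implication $(3)\Rightarrow(1)$ is immediate: since $y\mapsto y+1$ is a total $\Sigma_n$-function and any $g$-stretched approximation with $g(y)\ge y$ is in particular an ordinary approximation, applying $(3)$ with $g(y)=y+1$ gives $(1)$. (The same remark with the constant function $a$ shows $(2)$ and $(3)$ are at least as natural as $(1)$, but this will not be needed.)

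For $(1)\Rightarrow(2)$ the idea is to encode the prescribed lower bound $a$ into the function being approximated. Given a partial $\Sigma_n$-function $f$ and numbers $a,b$, I would form the partial $\Sigma_n$-function $\hat f$ with $\dom\hat f=\{0\}\cup\{x+1:x\in\dom f\}$, $\hat f(0)=a+1$, and $\hat f(x+1)=\max(a,f(x)+1)$ for $x\in\dom f$, and apply $(1)$ to obtain an approximation $\langle t_0,\dots,t_b\rangle$ to $\hat f$ of length $b+1$. Since $0\le t_0$, the approximation property forces $t_1\ge\hat f(0)=a+1$; and whenever $x\in\dom f$ with $x+1\le t_{j+1}$ one has $f(x)+1\le\hat f(x+1)\le t_{j+2}$. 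Hence $s_j:=t_{j+1}-1$ for $j<b$ defines an approximation to $f$ of length $b$ with $s_0\ge a$. The only care needed here is to verify that $\hat f$ is a genuine single-valued partial $\Sigma_n$-function, which is routine given the disjoint description of its domain.

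The substantive step is $(2)\Rightarrow(3)$. First I would normalise $g$: a total function with $\Sigma_n$ graph also has a $\Pi_n$ graph, hence is $\Delta_n$, and replacing $g$ by $\bar g(y):=y+1+\max\{g(z):z\le y\}$ — still total and $\Delta_n$ — one may assume $g$ is $\Delta_n$, strictly increasing, unbounded, and satisfies $g(y)>y$ (any $\bar g$-stretched approximation is a fortiori a $g$-stretched one). Then I would apply $(2)$ to the partial $\Sigma_n$-function $g\circ f$ with lower bound $g(0)$, obtaining an approximation $\langle\tau_0,\dots,\tau_{b-1}\rangle$ to $g\circ f$ with $\tau_0\ge g(0)$, which may be taken nondecreasing so that every $\tau_i\ge g(0)$. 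Let $\gamma(u):=\max\{t:g(t)\le u\}$, a total $\Delta_n$-function on $[g(0),\infty)$ since $\{t:g(t)\le u\}$ is an initial segment bounded by $u$, and set $s_i:=\gamma(\tau_i)$. Then $\langle s_0,\dots,s_{b-1}\rangle$ is a $g$-stretched approximation to $f$: if $x\le g(s_i)$ and $x\in\dom f$, then $x\le g(\gamma(\tau_i))\le\tau_i$, so $g(f(x))=(g\circ f)(x)\le\tau_{i+1}$ by the approximation property, and therefore $f(x)\le\gamma(\tau_{i+1})=s_{i+1}$.

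I expect the main obstacle to be getting this bookkeeping exactly right: the auxiliary objects $\hat f$, $\bar g$, $g\circ f$ and $\gamma$ must each be checked to be bona fide partial $\Sigma_n$- (resp.\ total $\Delta_n$-) functions, and the domain shifts together with the two occurrences of $+1$ in the construction of $\hat f$ must be balanced so that an ordinary approximation of the modified function decodes, after subtracting a constant, to an approximation of $f$ with the desired extra feature. Beyond that the argument only uses standard facts about $I\Sigma_n$ — that total $\Sigma_n$-functions are $\Delta_n$, that $I\Sigma_n$ proves $B\Sigma_n$ and the least-number principle for $\Sigma_n$- (hence $\Delta_n$-) formulas, and consequently that a $\Delta_n$-function attains a maximum on every bounded set — which I would cite rather than reprove.
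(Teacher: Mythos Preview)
Your argument is correct. For $(1)\Rightarrow(2)$ you and the paper use the same shift trick (plant the value $a$ at input $0$ and approximate the shifted function); your version with the extra $+1$'s and the final subtraction is in fact a little more careful about the off-by-one than the paper's terse $g(0)=a$, $g(x+1)=f(x)$, $s(i)=t(i+1)$.

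The real divergence is in $(2)\Rightarrow(3)$. The paper, after passing to $h=g\circ f$ and obtaining an approximation $t$, restricts $f$ and $g$ to the $M$-finite interval $[0,t(b-1)]$ and then \emph{recursively} builds $s$ via $s(0)=0$, $s(i)=\max\big(\{F(x):x\le G(s(i-1)),\,x\in\dom F\}\cup\{s(i-1)\}\big)$, verifying by induction that $g(s(i))\le t(i)$. You instead define the ``pseudo-inverse'' $\gamma(u)=\max\{t:g(t)\le u\}$ once and for all and set $s_i=\gamma(\tau_i)$ directly. Your route is cleaner and avoids the internal recursion; the paper's route makes explicit that everything happens inside a single $M$-finite block, which is perhaps closer in spirit to how $P\Sigma_n$ is used later. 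Both are valid and rest on the same standard $I\Sigma_n$ facts you cite.
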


\begin{proof}
It suffices to prove that (1) implies each of the others.
So we take an arbitrary $M \models P\Sigma_n$.

For a given partial $\Sigma_n$-function $f$ and a given pair $a$ and $b$,
 define
\[
  g(0) = a, \quad g(x+1) = f(x).
\]
Let $t$ be an approximation to $g$ of length $b+1$.
Then the sequence defined below is an approximation to $f$ of length $b$,
\[
  s(i) = t(i+1).
\]
This proves that (2) holds in $M$.

To prove (3) in $M$, fix $f$, $g$ and $b$ as in (3).
We may assume that $g$ is strictly increasing.
Let $h = g \circ f$. So $h$ is a partial $\Sigma_n$-function.
Let $a$ be s.t. if $h(0)$ is defined then $a \geq h(0)$.
By (2), let $t$ be an approximation to $h$ of length $b$ with $t(0) \geq a$.
Let $F$ and $G$ be the restrictions of $f$ and $g$ to $[0,t(b-1)]$ respectively.
By $I\Sigma_n$, both $F$ and $G$ are $M$-finite.
We define a sequence $s$ by induction.
\begin{enumerate}
 \item $s(0) = 0$.
 \item If $0 < i < b$ and $s(i-1)$ is defined, then let
\[
  s(i) = \max \left( \{F(x): x \leq G(s(i-1)) \wedge x \in \dom F\} \cup \{s(i-1)\} \right).
\]
\end{enumerate}
We prove that $g(s(i)) \leq t(i)$ for each $i < b$.
By definition, $g(s(0)) \leq t(0)$.
Suppose that $0 < i < b$ and $g(s(i-1)) \leq t(i-1)$.
If $x \leq g(s(i-1))$ and $f(x)$ is defined, then $h(x) \leq t(i)$ by the choice of $t$.
Hence $g(s(i)) \leq t(i)$.
So the inductive construction of $s$ can be done with the help of $F$ and $G$ in $M$.
It follows from the construction that $s$ is a $g$-stretched approximation to $f$ of length $b$.
\end{proof}

By the next lemma, the satisfaction of $P\Sigma_n$ cannot be changed by cofinal extensions. 

\begin{lemma}[Chong, Wang and Yang]\label{lem:Approx-Cof-ext}
Suppose that $M \models I\Sigma_n$ and $M \preceq_{\Sigma_{n+1}, \operatorname{cf}} N$.
Then
\[
  M \models P\Sigma_n \text{ if and only if } N \models P\Sigma_n.
\]
\end{lemma}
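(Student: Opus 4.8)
The plan is to prove both directions using the characterization of $P\Sigma_n$ given by clause (2) of Lemma \ref{lem:PSigma-variants}, namely: for every partial $\Sigma_n$-function $f$ and every pair $a,b$ there exists an approximation $s$ to $f$ of length $b$ with $s(0) \geq a$. The key observation is that being an approximation to a $\Sigma_n$-function is expressible in a way that interacts well with $\Sigma_{n+1}$-elementarity and cofinality. First I would make precise that the statement ``$s$ is an approximation to $f$ of length $b$'' is $\Pi_n$ with parameters naming $s$, $b$, and the $\Sigma_n$-formula defining $f$: the defining clause $\forall i < b-1\, \forall x \leq s(i)\, \forall y(y = f(x) \to y \leq s(i+1))$ is, after writing $y = f(x)$ as a $\Sigma_n$-formula, a bounded-quantifier combination that is $\Pi_n$ overall (since $y = f(x)$ being $\Sigma_n$, its negation is $\Pi_n$, and it occurs negatively under universal number quantifiers). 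Hence ``there exists an approximation to $f$ of length $b$ with first entry $\geq a$'' is $\Sigma_{n+1}$ in the parameters $a,b$ and the code of the defining formula of $f$.

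For the direction $N \models P\Sigma_n \Rightarrow M \models P\Sigma_n$: given a partial $\Sigma_n^M$-function $f$ and $a,b \in M$, the same $\Sigma_n$-formula defines a (possibly larger domain) partial $\Sigma_n^N$-function $f^N$ with $f^N \restriction M \supseteq f$ — more carefully, $f^N$ agrees with $f$ on arguments in $M$ by $\Sigma_{n+1}$-elementarity, since $y = f(x)$ and its negation are both $\Sigma_{n+1}$-expressible relative to $M \preceq_{\Sigma_{n+1}} N$. By $N \models P\Sigma_n$ there is an approximation $s \in N$ to $f^N$ of length $b$ with $s(0) \geq a$. The point is that such an $s$, if coded as a number in $N$, need not lie in $M$; but one can instead apply $P\Sigma_n$ in $N$ with first entry $\geq a'$ for a well-chosen $a' \in M$ and then truncate, or better, simply note that the $\Sigma_{n+1}$-sentence ``$\exists s\,(s$ is an approximation to $f$ of length $b$ with $s(0)\geq a)$'' holds in $N$, and by $\Sigma_{n+1}$-elementarity of $M \preceq N$ it holds in $M$ as well. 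This direction therefore needs only $\Sigma_{n+1}$-elementarity, not cofinality.

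For the direction $M \models P\Sigma_n \Rightarrow N \models P\Sigma_n$: here cofinality is essential. Given a partial $\Sigma_n^N$-function $g$ with code $\varphi$ possibly involving parameters from $N \setminus M$, I cannot directly pull $\varphi$ back to $M$. Instead I would argue as follows. Fix $b \in N$ and a target first entry; by cofinality every element of $N$ is below some element of $M$, and the parameters of $\varphi$ together with $b$ lie below some $m \in M$. Using $I\Sigma_n$ in $N$, form the $N$-finite restriction $G$ of $g$ to $[0, t]$ for a suitable $t$, but this again may fail to be in $M$. The cleanest route is the contrapositive combined with $\Sigma_{n+1}$-elementarity in the reverse form: if $N \not\models P\Sigma_n$, then there is a partial $\Sigma_n^N$-function $g$ and some $b \in N$ such that $N \models \forall s\,(s$ is not an approximation to $g$ of length $b)$, which is a $\Pi_{n+1}$ assertion; by cofinality one shows the parameters and $b$ can be replaced by ones in $M$ (choosing $b' \geq b$ in $M$ makes the failure persist, since a length-$b'$ approximation truncates to a length-$b$ one), and then the failure transfers down to $M$ by $\Pi_{n+1}$-preservation downward under $\Sigma_{n+1}$-elementary extension. \textbf{The main obstacle} is exactly this parameter-management step: showing that a failure of $P\Sigma_n$ in $N$ witnessed by data above $M$ can always be ``compressed'' to a failure witnessed by data in $M$. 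I expect this to go through by monotonicity of the approximation notion in $b$ (longer approximations restrict to shorter ones) together with the fact that, by cofinality, any $\Sigma_n^N$-partial function is dominated on any $M$-definable initial segment by an $M$-finite function, so the genuinely new behavior of $g$ near the top of $N$ is irrelevant to the existence of approximations of $M$-definable length starting from $M$-definable values.
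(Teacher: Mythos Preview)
The easy direction ($N \models P\Sigma_n \Rightarrow M \models P\Sigma_n$) is handled correctly; the paper does it in one line by noting that $P\Sigma_n$ is $\Pi_{n+2}$ and hence goes down under $\Sigma_{n+1}$-elementary extensions, but your instance-by-instance argument via $\Sigma_{n+1}$-absoluteness amounts to the same thing.

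The hard direction has a genuine gap. You correctly isolate the obstacle: the $\Sigma_n^N$-partial function may be $\varphi(a,x,y)$ with $a \in N \setminus M$, and no direct $\Sigma_{n+1}$- or $\Pi_{n+1}$-transfer will move this parameter into $M$. Your proposed fixes do not address this. Monotonicity in $b$ lets you replace $b \in N$ by a larger $b' \in M$, but does nothing about $a$. The observation that $g \restriction [0,m]$ is bounded by an element of $M$ for each $m \in M$ is true (by $I\Sigma_n$ in $N$ plus cofinality), but this bound depends on $a$ and is not uniformly $M$-definable, so you cannot iterate it $b$ many (possibly nonstandardly many) times inside $M$ --- that iteration is precisely $P\Sigma_n$, which you are trying to establish in $N$, not assume. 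The contrapositive route has the same defect: ``there is no approximation to $\varphi(a,\cdot,\cdot)$ of length $b'$\,'' is $\Pi_{n+1}$ \emph{with parameter $a$}, and you give no mechanism to replace $a$ by an element of $M$.

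The paper's solution is to \emph{bundle} all candidate parameters at once. Choose $\bar{a} \in M$ with $\bar{a} > a$; the set $A = \{k < \bar{a}: \varphi(k,\cdot,\cdot) \text{ defines a partial function}\}$ is $\Pi_n^M$ and bounded, hence $M$-finite, and by elementarity $a \in A$. Then $\psi(\langle k,x\rangle, y) \equiv k \in A \wedge \varphi(k,x,y)$ defines a single $\Sigma_n^M$-partial function $F_\psi$. An ordinary approximation to $F_\psi$ is not quite enough (since $x \leq s(i)$ does not imply $\langle k,x\rangle \leq s(i)$), so the paper invokes clause~(3) of Lemma~\ref{lem:PSigma-variants}: a $g$-stretched approximation $s$ to $F_\psi$ with $g(x) = \langle \bar{a}, x\rangle$ exists in $M$. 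This $s$ then satisfies, in $M$, the $\Pi_n$-statement that it is simultaneously an approximation to $\varphi(k,\cdot,\cdot)$ for every $k \in A$; that $\Pi_n$-statement transfers up to $N$, where in particular it applies to $k = a$. This bundling-plus-stretching step is the missing ingredient in your proposal, and it is exactly why Lemma~\ref{lem:PSigma-variants} (clause~(3), not just clause~(2)) was proved first.
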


\begin{proof}
Suppose that $N \models P\Sigma_n$.
Since $P\Sigma_n$ is a $\Pi_{n+2}$-statement and $M \preceq_{\Sigma_{n+1}} N$,
 $M \models P\Sigma_n$ as well.

Suppose that $M \models P\Sigma_n$.
Let $\varphi(w,x,y)$ be a $\Sigma_n$-formula with all free variables displayed and $a \in N$ be s.t. $\varphi(a,x,y)$ defines a partial function in $N$.
Fix $\bar{a} \in M$ with $\bar{a} > a$.
Let
\[
  A = \{k \in M: k < \bar{a}, M \models \varphi(k,x,y) \text{ defines a partial function}\}.
\]
Then $A$ is $\Pi_n^M$ and bounded, thus $A$ is $M$-finite.
By elementarity, $N \models a \in A$.
For each $k \in A$, let $F_{\varphi,k}$ denote the partial function defined by $\varphi(k,x,y)$.
Let $\psi(\<k,x\>,y)$ be $k \in A \wedge \varphi(k,x,y)$.
Then $\psi \in \Sigma_n^M$ and defines a partial function $F_\psi$ in $M$.
Let $g(x) = \<\bar{a},x\>$.
Fix $b \in M$.
By Lemma \ref{lem:PSigma-variants}, in $M$ there exists a $g$-stretched approximation $s$ to $F_\psi$ of length $b$.
For $i < b-1$, $x \leq s(i)$ and $k \in A$, $\<k,x\> \leq g(s(i))$;
by the choice of $s$, if $y = F_{\varphi,k}(x) = F_\psi(\<k,x\>)$, then $y \leq s(i+1)$.
Thus
\[
  M \models \forall i < b-1 \forall k \in A \forall x \leq s(i) \forall y
  (\varphi(k,x,y) \to y \leq s(i+1)).
\]
By elementarity, the same holds in $N$ as well.
Thus, in $N$, $s$ is an approximation to the partial function defined by $\varphi(a,x,y)$ of length $b$.
As $M$ is cofinal in $N$, this shows that $N \models P\Sigma_n$.
\end{proof}

Fix a countable $M \models P\Sigma_n + \neg B\Sigma_{n+1}$.
By combining the proofs of \cite[Theorem 5.10]{BCWWY:2021} and Theorem \ref{thm:FRT-WPHP} here,
there exists $N$, s.t., $M \prec_{\Sigma_{n+1},cf} N$, $N$ satisfies $I\Sigma_n$, $\WPHP(\Sigma_{n+1})$ and $\FRT(\Sigma_{n+1})$, but not $B\Sigma_{n+1}$.
By the lemma above, $N \models P\Sigma_n$ as well.

So Theorem \ref{thm:PSigma} is proved.

\section{Questions}

Personally, this research started because of \cite{BCWWY:2021},
where it is shown that the first order theory of $\RCA_0 + 2\text{-}\RAN$ is below $I\Sigma_1 + \WPHP(\Sigma_2)$,
where $2\text{-}\RAN$ is the $\Pi^1_2$-sentence in second order arithmetic saying that for every $X$ there exists $2$-random $Y$ relative to $X$.
The separation of $\CARD(\Sigma_2)$, $\GPHP(\Sigma_2)$ and $\WPHP(\Sigma_2)$ here naturally leads to the question whether $\RCA_0 + 2\text{-}\RAN$ implies $\GPHP(\Sigma_2)$

Another kind of questions is about the relation between $\FRT^e_k(\Sigma_n)$.
According to the above sections, based on $I\Sigma_n$, the following implications are known,
\[
  B\Sigma_{n+1} \to \WPHP(\Sigma_{n+1}) \to \FRT^e_k(\Sigma_{n+1}) \to 
   \FRT^1(\Sigma_{n+1}) \leftrightarrow \GPHP(\Sigma_{n+1}),
\]
and so are the following non-implications
\[
  B\Sigma_{n+1} \not\leftarrow \WPHP(\Sigma_{n+1}) \not\leftarrow \FRT^e_k(\Sigma_{n+1}) \not\leftarrow \FRT^1(\Sigma_{n+1}),
\]
where both $e$ and $k$ are integers greater than $1$.
But the relations among $\FRT^e_k(\Sigma_{n+1})$'s remain unknown.

\bibliographystyle{plain}
\bibliography{def-comb}

\end{document}